\newtheorem{theorem}{Theorem}[section]
\newtheorem{definition}{Definition}[section]
\newtheorem{lemma}{Lemma}[section]
\newtheorem{proposition}{Proposition}[section]
\newtheorem{remark}{Remark}[section]
\newtheorem{algorithm}{Algorithm}[section]
\numberwithin{equation}{section}
\begin{document}

\title{An inertial three-operator splitting algorithm with applications to image inpainting}

\author{ Fuying Cui$^{1}${\thanks{email: 381866994@qq.com}}, Yuchao Tang$^{1}${\thanks{Corresponding author. email: hhaaoo1331@163.com}}, Yang Yang$^{1}${\thanks{email: 1179916944@qq.com}}
\\
\\
{ \small 1. Department of Mathematics,} \\
{\small Nanchang University } \\
{\small Nanchang 330031, Jiangxi, P.R. China} \\}

\date{}

%\renewcommand{\theequation}{\thesection.\arabic{equation}}
%\catcode`@=11 \@addtoreset{equation}{section} \catcode`@=12
\date{}
\maketitle{}

 {\bf Abstract.}
The three-operators splitting algorithm is a popular operator splitting method for finding the zeros of the sum of three maximally monotone operators, with one of which is cocoercive operator.
In this paper, we propose a class of inertial three-operator splitting algorithm. The convergence of the proposed algorithm is proved by applying the inertial Krasnoselskii-Mann iteration under certain conditions on the iterative parameters in real Hilbert spaces. As applications,  we develop an inertial three-operator splitting algorithm to solve the convex minimization problem of the sum of three convex functions, where one of them is differentiable with Lipschitz continuous gradient. Finally, we conduct numerical experiments on a constrained image inpainting problem with nuclear norm regularization. Numerical results demonstrate the advantage of the proposed inertial three-operator splitting algorithms.

\textbf{Key words}: Three-operator splitting algorithm; Inertial three-operator splitting algorithm; Krasnoselskii-Mann iteration; Image inpainting.

\textbf{AMS Subject Classification}: 90C25; 65K05; 47H05.

%%%%%%%%%%%%%%%%%%%%%%%%%%%%%%%%%%%%%%%%%%%%%%%%%%%%%%%%%%%%%%%%%%%%%%%%%%%%%%%%%%%%%%%%%%%%%%%%%%%%%%%%%%%%%%%%%%%%%%%%%
\section{ Introduction}\label{sec-introduction}
\vskip 3mm

Operator splitting algorithms have been widely used for solving many convex optimization problems in signal and image processing, machine learning and
 medical image reconstruction, etc. The traditional operator splitting algorithms include the forward-backward splitting algorithm \cite{lionsandmercier1979}, the Douglas-Rachford splitting algorithm \cite{bouglasandrachford1956TAMS}, and the forward-backward-forward splitting algorithm \cite{Tseng2000SIAM}, which are originally designed for solving the monotone inclusion of the sum of two maximally monotone operators, where one of which is assumed to be cocoercive or just Lipschitz continuous. In recent years, the monotone inclusion problems with the sum of more than two operators have been received much attention. See, for example \cite{Combettes2012SVVA,Combettes2013Systems,vu2013ACM,Vu2015JOTA,Tang2019JCM}.

 Let $H$ be a real Hilbert space. Let $A, B: H\rightarrow 2^H$ be two maximally monotone operators and let $C:H\rightarrow H$ be a cocoercive operator.
 Davis and Yin \cite{davis2015} proposed a so-called three-operator splitting algorithm (also known as Davis-Yin splitting algorithm \cite{Liu2019JOPT}) to solve the monotone inclusion of the form
\begin{equation}\label{more two-monotone-inclusion}
\textrm{ find }\, x\in H \textrm{ such that }\, 0\in Ax+Bx+Cx.
\end{equation}
They pointed out that the three-operator splitting algorithm includes many well-known operator splitting algorithms, such as the forward-backward splitting algorithm \cite{combettes2005}, the Douglas-Rachford splitting algorithm \cite{combettes2007} and the backward-forward splitting algorithm.
Raguet et al. \cite{Raguet-SIAM-2013} proposed a generalized forward-backward splitting algorithm to solve the monotone inclusion of
\begin{equation}\label{more-monotone-inclusion}
\textrm{ find }\, x\in H \textrm{ such that }\, 0\in{ Bx+\sum_{i=1}^{m}A_{i}x},
\end{equation}
where $m\geq 1$ is an integer,  $\{A_{i}\}_{i=1}^{m}: H\rightarrow 2^H$ are maximally monotone operators and $B:H\rightarrow H$ is a cocoercive operator. In particular, when $m=1$, the generalized forward-backward splitting algorithm reduces to the forward-backward splitting algorithm.
Furthermore, Raguet and Landrceu \cite{Raduet2015SIAMJIS} presented a precondition of the generalized forward-backward splitting algorithm for solving the monotone inclusion (\ref{more-monotone-inclusion}).
By introducing a suitable product space, the monotone inclusion (\ref{more-monotone-inclusion}) can be reformulated as the sum of three maximally monotone operators, where one of them is a normal cone of a closed vector subspace. To solve this equivalent monotone inclusion problem, Briceno-Arias \cite{briceno2015Optim} introduced two splitting algorithms: the Forward-Douglas-Rachford splitting algorithm and the forward-partial inverse splitting algorithm. As a consequence, the generalized forward-backward splitting algorithm could be derived from the Forward-Douglas-Rachford splitting algorithm. Notice that the normal cone of a closed vector subspace is also a maximally monotone operator, then the three-operator splitting algorithm can be applied to solve the monotone inclusion studied in \cite{briceno2015Optim}, which also results in the generalized forward-backward splitting algorithm \cite{Raguet-SIAM-2013}. Some recent generalization of the three-operator splitting algorithm in the direction of stochastic and inexact can be found in \cite{Cevher2016Report,Zong2018}.

In recent years, the inertial method has become more and more popular. Various inertial type algorithms were studied, see for example \cite{Alvarez2004,Ochs2014,Bot2015NFAO,Chen2015,Dong2016Optim,Combettes2017,Attouch2018} and references therein. The inertial method is also called the heavy ball method, which is based on a discretization of a second order dissipative dynamic system. In 1983, Nesterov \cite{nesterov1983} proposed an inertial gradient descent algorithm, which modified the heavy ball method of Polyak \cite{polyak1964}. Further, G\"{u}ler \cite{Guler1992} generalized Nesterov's method to the proximal point algorithm for solving the minimization of a nonsmooth convex function. In \cite{Alvarez2001}, Alvarez and Attouch proposed an inertial proximal point algorithm (iPPA) for solving zeros of a maximally monotone operator, which is defined by the following: let $x^{0}, x^{-1}\in H$, and set
\begin{equation}\label{inertial-proximal-point-algorithm}
\left\{
\begin{aligned}
\omega^{k} &=x^{k}+\alpha_{k}(x^{k}-x^{k-1}), \\
x^{k+1} &=(I+\lambda_{k}A)^{-1}(\omega^{k}).
\end{aligned}
\right.
\end{equation}
They proved the convergence of (\ref{inertial-proximal-point-algorithm}) under the condition: $\inf \lambda_k >0$ and
\begin{equation}\label{inertial-parameter-1}
\begin{aligned}
& \textrm{ (i) }\, \{\alpha_k \} \subseteq [0,\overline{\alpha}),\, \overline{\alpha}\in [0,1), \\
& \textrm{ (ii) } \sum_{k=0}^{+\infty} \alpha_k \| x^k - x^{k-1} \|^2 < +\infty.
\end{aligned}
\end{equation}
Moudafi and Oliny \cite{Moudafi2003} introduced an inertial algorithm for finding zeros of the sum of two maximally monotone operators $A$ and $B$, where $B$ is $\beta$-cocoercive, for some $\beta >0$.
The following iterative algorithm is defined in \cite{Moudafi2003}.
\begin{equation}\label{inertial-algorithm-MO}
\left\{
\begin{aligned}
\omega^{k} &=x^{k}+\alpha_{k}(x^{k}-x^{k-1}), \\
x^{k+1} &=(I+\lambda_{k}A)^{-1}(\omega^{k} - \lambda_k Bx^k).
\end{aligned}
\right.
\end{equation}
They proved the convergence of the proposed inertial algorithm (\ref{inertial-algorithm-MO}) under the same condition (\ref{inertial-parameter-1}) imposed on the inertial parameters $\{\alpha_k\}$ as well as $\lambda_k \in (0, 2\beta)$. Lorenz and Pock \cite{lorenz2015JMIV} proposed a preconditioner, inertial forward-backward splitting algorithm as follows, in which the cocoercive operator is evaluated at the inertial extrapolate iteration scheme and a symmetric, positive define linear operator is used as a preconditioner.
\begin{equation}\label{inertial-algorithm-LP}
\left\{
\begin{aligned}
\omega^{k} &=x^{k}+\alpha_{k}(x^{k}-x^{k-1}), \\
x^{k+1} &=(I+\lambda_{k}M^{-1}A)^{-1}(\omega^{k} - \lambda_k M^{-1} B\omega^k),
\end{aligned}
\right.
\end{equation}
where $M$ is a linear self-adjoint and positive define operator.
Comparing (\ref{inertial-algorithm-MO}) and (\ref{inertial-algorithm-LP}), we can see that the operator $B$ is evaluated at the current point $x^k$ in (\ref{inertial-algorithm-MO}), but it is calculated at the inertial term in (\ref{inertial-algorithm-LP}).

The inertial proximal point algorithm (\ref{inertial-proximal-point-algorithm}) has been proved an effective way to accelerate the speed of the proximal point algorithm. Before the inertial proximal point algorithm, the relaxed proximal point algorithm proposed by Eckstein and Bertsekas \cite{Eckstein1992} is another approach for accelerating the proximal point algorithm. For this purpose, Maing\'{e} \cite{Mainge2008JCAM} combined the relaxed strategy with the inertial method, and proposed the following relaxed inertial proximal point algorithm
 \begin{equation}\label{relaxed-inertial-proximal-point-algorithm}
\left\{
\begin{aligned}
\omega^{k} &=x^{k}+\alpha_{k}(x^{k}-x^{k-1}), \\
x^{k+1} &= (1-\rho_k)\omega^k + \rho_k(I+\lambda_{k}A)^{-1}(\omega^{k}).
\end{aligned}
\right.
\end{equation}
Under the condition $0<\inf \rho_k \leq \sup \rho_k <2$ and (\ref{inertial-parameter-1}), the weak convergence of (\ref{relaxed-inertial-proximal-point-algorithm}) was proved in \cite{Mainge2008JCAM}, which was based on the inertial Krasnoselskii-Mann iteration (\ref{inertial-KM-iteration}).

In order to ensure the convergence of the inertial algorithms mentioned above, the condition (\ref{inertial-parameter-1}) is usually enforced. To deal with the issue of choosing the inertial parameters $\{\alpha_k\}$ as a priori, Alvarez and Attouch \cite{Alvarez2001} proved the convergence of the inertial proximal point algorithm (\ref{inertial-proximal-point-algorithm}) under the requirement of $\{\alpha_k\}$ is a nondecreasing sequence in $[0,\overline{\alpha})$ with $\overline{\alpha}< 1/3$. Lorenz and Pock \cite{lorenz2015JMIV} also proved the convergence of the inertial forward-backward splitting algorithm (\ref{inertial-algorithm-LP}) without the condition (\ref{inertial-parameter-1}). Bo\c{t} et al. \cite{Bot2015AMC} proved the convergence of the inertial Krasnoselskii-Mann iteration (\ref{inertial-KM-iteration}) without the condition (\ref{inertial-parameter-1}). Consequently, they proposed an inertial Douglas-Rachford splitting algorithm for solving the monotone inclusion problem of the sum of two maximally monotone operators. Further, in \cite{Bot2016MTA}, Bo\c{t} and Csetnek proposed an inertial alternating direction method of multipliers based on the inertial Douglas-Rachford splitting algorithm.
In the context of convex minimization, the inertial forward-backward splitting algorithm leads to the so-called inertial proximal gradient algorithm. Just we have mentioned that the inertial parameters affect greatly the performance of the inertial algorithms. The Nesterov's sequence is one of popular choice, which reduces to the famous iterative shrinkage thresholding algorithm (FISTA) \cite{beck2009}. However, the convergence of the sequences generated by FISTA has been missed for a long time. Recently, Chambolle and Dossal \cite{Chambolle2015} proposed a different kind of inertial parameters for studying the convergence of the inertial proximal gradient algorithm. The convergence rate of the inertial proximal gradient algorithm with various options for the inertial sequences $\{\alpha_k\}$ were recently established in \cite{Attouch2016SJO, Attouch2018SJO}.  An important property of these special inertial parameters for the inertial proximal gradient algorithm is that they all converge to one. However, it imposes more restrictions on the step size and also excludes the relaxation parameters. In addition, the convergence properties of the inertial proximal gradient algorithm with special choices of inertial parameters is limited in the context of convex minimization. It is not cleared whether these results can be extended to the general inertial forward-backward splitting algorithm for solving the monotone inclusion problem, such as (\ref{inertial-algorithm-MO}) or (\ref{inertial-algorithm-LP}).

The purpose of this paper is to study a class of inertial three-operator splitting algorithm for solving the monotone inclusion problem (\ref{more two-monotone-inclusion}), which unifies the three-operator splitting algorithm and the inertial methods. We analyze the convergence of the proposed iterative algorithm under different conditions on the parameters. As a consequence, we obtain an inertial three-operator splitting algorithm for solving the convex minimization problem of the sum of three convex functions, where one of them is differentiable with Lipschitz continuous gradient and the others are proximable friendly convex functions. We verify the advantage of the proposed inertial three-operator splitting algorithm by applying it to a constrained image inpainting problem.

The rest of this paper is organized as follows. Section 2, we recall some notations and definitions in monotone operators theory and convex analysis. Moreover, we also give some technical lemmas, which will be used in the following sections. Section 3, we propose a class of inertial three-operator splitting algorithm and establish the main convergence theorems. We also apply the proposed iterative algorithm to solve the convex optimization problem with the sum of three convex functions. Section 4, we apply the proposed inertial three-operator splitting algorithm to solve a constrained image inpainting problem and report numerical experiments results. Finally, we give some conclusions and future works.

%%%%%%%%%%%%%%%%%%%%%%%%%%%%%%%%%%%%%%%%%%%%%%%%%%%%%%%%%%%%%%%%%%%%%%%%%%%%%%%%%%%%%%%%%%%%%%%%%%%%%%%%%%%%%%%%%%%%%%%%%%%%%%%%%%%%%%%%%%%%%
\section{Preliminaries}\label{sec:pre}
\vskip 3mm

In this section, we review some  basic definitions and lemmas in monotone operator theory and convex analysis. Let $H$ be a real Hilbert space, the scalar product in $H$ is denoted by $\langle\cdot,\cdot\rangle$ and the corresponding norm in $H$ is $\|\cdot\|$.
Let $A:H\rightarrow 2^H$ be a set-valued operator. We denote its domain, range, graph and zeros  by dom $A= \{ x\in H| Ax \neq \emptyset \}$, ran $A = \{ u\in H | (\exists x\in H) u\in Ax\}$,  gra $A = \{ (x,u)\in H\times H | u\in Ax \}$, and zer $A = \{x\in H  | 0\in Ax\}$, respectively. $A^{-1}$ be inverse operator of $A$, defined by $(x,u)\in \textrm{gra } A$ if and only if $(u,x)\in \textrm{gra } A^{-1}$.

\begin{definition}(\cite{bauschkebook2017})
Let $A:H\rightarrow 2^H$ be a set-valued operator.

\noindent \emph{(i)}\, $A$ is said to be monotone, if
$$
\langle x-y,u-v \rangle \geq 0, \quad \forall (x,u), (y,v)\in \textrm{gra } A.
$$
Moreover, $A$ is said to be maximally monotone, if its graph is not strictly contained in the graph of any other monotone operator.

\noindent \emph{(ii)}\, $A$ is said to be uniformly monotone, if there exists an increasing function
$\psi: [0,+\infty) \rightarrow [0,+\infty]$ which vanishes only at $0$ such that
$$
\langle x-y,u-v \rangle \geq \psi(\|x-y\|),  \forall (x,u), (y,v)\in \textrm{gra } A.
$$
If $\psi =\gamma (\cdot)^{2}$, then the $A$ is called $\gamma$-strongly monotone.

\noindent \emph{(iii)}\, $A$ is said to be demiregular at $x \in \textrm{dom}\, A$, if for all $u\in Ax$ and for all sequences $(x^k, u^k)\in gra\, A$ with $x^k \rightharpoonup x$ and $u^k \rightarrow u$, we have $x^k\rightarrow x$.
\end{definition}

\begin{definition}(\cite{bauschkebook2017})
Let $B:H\rightarrow H$ be a single-valued operator. $B$ is called $\beta$-cocoercive, where $\beta \in (0, +\infty)$, if
$$
\langle x-y, Bx-By \rangle \geq \beta \|Bx-By\|^{2}, \quad \forall x,y\in H.
$$
\end{definition}

\begin{definition}(\cite{bauschkebook2017})
Let $A:H\rightarrow 2^H$ be a maximally monotone operator. The resolvent operator of $A$ with index $\gamma >0$ is defined as
$$
J_{\gamma A} = (I+\gamma A)^{-1}.
$$
where $I$ is the identity operator.
\end{definition}

Next, we recall definitions of nonexpansive and related nonlinear operators. These operators often appear in the convergence analysis of operator splitting algorithms.

\begin{definition}(\cite{bauschkebook2017})
Let $C$ be a nonempty subset of $H$. Let $T:C\rightarrow H$, then

\noindent \emph{(i)} $T$ is called nonexpansive, if
$$
\|Tu-Tv\| \leq \|u-v\|, \quad \forall u,v\in C.
$$

\noindent \emph{(ii)} $T$ is called firmly nonexpansive, if
$$
\|Tu-Tv\|^2 \leq \|u-v\|^2 - \| (I-T)u- (I-T)v \|^2, \quad \forall u,v\in C.
$$

\noindent \emph{(iii)} $T$ is called $\theta$-averaged, where $\theta\in (0,1)$, if there exists a nonexpansive operator $S$ such that $T = (1-\theta)I + \theta S$.
\end{definition}

It is easy to prove that $T$ is $\theta$-averaged if and only if
$$
\|Tu-Tv\|^2 \leq \|u-v\|^2 - \frac{1-\theta}{\theta}\|(I-T)u-(I-T)v\|, \quad \forall u,v\in C.
$$.
On the other hand, the resolvent operator $J_{\gamma A}$ is firmly nonexpansive and also nonexpansive operator.

Davis and Yin \cite{davis2015} proved the following important lemma, which provides a fixed point characterize of the three-operator monotone inclusion problem (\ref{more two-monotone-inclusion}).

\begin{lemma}(\cite{davis2015})\label{three-operator-lemma}
Let $\gamma >0$, define $T = J_{\gamma A}(2J_{\gamma B} - I - \gamma CJ_{\gamma B}) + I - J_{\gamma B}$. The following set equality holds
\begin{equation}
zer(A+B+C)=J_{\gamma B}(Fix (T)). \nonumber
\end{equation}
In addition
\begin{equation}
Fix (T) = \{x+\gamma \mu | 0 \in (A+B+C)x, \mu \in (Bx) \bigcap (-Ax-Cx)\}. \nonumber
\end{equation}
\end{lemma}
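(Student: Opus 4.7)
The plan is to exploit the basic resolvent identity: for any maximally monotone operator $M$ and any $q \in H$, one has $p = J_{\gamma M} q$ if and only if $q - p \in \gamma M p$. With this tool, the fixed-point equation $Tz = z$ unfolds into two scalar-multiplied inclusions, and a single change of variables turns them into the statement $0 \in (A+B+C)x$. I would prove the second set equality first and then read off the first as a corollary.

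First, I would establish the inclusion $\subseteq$ in the displayed description of $\mathrm{Fix}(T)$. Given $z \in \mathrm{Fix}(T)$, introduce $x := J_{\gamma B} z$ and $\mu := (z-x)/\gamma$, so that $\mu \in Bx$ by the resolvent identity. Rearranging $Tz = z$ by cancelling $z - J_{\gamma B} z$ gives
\begin{equation*}
J_{\gamma A}\bigl(2x - z - \gamma Cx\bigr) = x,
\end{equation*}
and another application of the resolvent identity rewrites this as $x - z - \gamma Cx \in \gamma Ax$, i.e., $-\mu - Cx \in Ax$. Thus $\mu \in -Ax - Cx$, and adding this to $\mu \in Bx$ yields $0 \in Ax + Bx + Cx$, while $z = x + \gamma\mu$ by construction. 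For the reverse inclusion $\supseteq$, given $x$ and $\mu \in Bx \cap (-Ax-Cx)$ with $0 \in (A+B+C)x$, set $z := x + \gamma\mu$ and run the same calculations in reverse: $z - x = \gamma\mu \in \gamma Bx$ gives $x = J_{\gamma B} z$, and then $2x - z - \gamma Cx - x = -\gamma\mu - \gamma Cx \in \gamma Ax$ gives $J_{\gamma A}(2x - z - \gamma Cx) = x$, which plugged into the definition of $T$ returns $Tz = z$.

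Next I would derive the first equality $\mathrm{zer}(A+B+C) = J_{\gamma B}(\mathrm{Fix}(T))$ essentially for free from the second. For $\supseteq$, any $z \in \mathrm{Fix}(T)$ has the form $z = x + \gamma\mu$ with $x \in \mathrm{zer}(A+B+C)$, and the previous paragraph showed $x = J_{\gamma B} z$, so $J_{\gamma B} z \in \mathrm{zer}(A+B+C)$. For $\subseteq$, any $x \in \mathrm{zer}(A+B+C)$ admits a decomposition $0 = a + b + Cx$ with $a \in Ax$ and $b \in Bx$; taking $\mu := b$ gives $\mu \in Bx \cap (-Ax-Cx)$, so $z := x + \gamma\mu \in \mathrm{Fix}(T)$ and $J_{\gamma B} z = x$.

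No step here is a serious obstacle; the whole argument is a tight chase through the definitions. The one place where care is needed is the sign bookkeeping when isolating $J_{\gamma A}(2J_{\gamma B}z - z - \gamma C J_{\gamma B} z) = J_{\gamma B} z$ from the equation $Tz = z$, since the $I - J_{\gamma B}$ term must cancel cleanly before the resolvent identity can be applied. After that, the rest is symmetric and reversible, which is precisely what lets the two inclusions in the fixed-point set be proved by the same chain of equivalences.
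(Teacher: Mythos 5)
Your proof is correct: the paper itself offers no proof of this lemma, citing it directly from Davis and Yin, and your argument is precisely the standard one from that source --- unfold $Tz=z$ to $J_{\gamma A}(2J_{\gamma B}z - z - \gamma C J_{\gamma B}z) = J_{\gamma B}z$, apply the resolvent characterization $p = J_{\gamma M}q \Leftrightarrow q-p \in \gamma Mp$ twice, and observe the chain of equivalences is reversible. No gaps.
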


\begin{proposition}(\cite{davis2015})\label{three-operator-prop}
Suppose that $T_{1}, T_{2}:H\rightarrow H$ are firmly nonexpansive and $C$ is $\beta$-cocoercive, for some $\beta > 0 $. Let $\gamma \in (0,2\beta)$. Then
\begin{equation}
T:=I-T_{2}+T_{1}\circ (2T_{2}-I-\gamma C\circ T_{2}),
\end{equation}
 is $\alpha$-averaged,  with $\alpha =\frac {2\beta}{4\beta-\gamma}< 1$.
In particular, the following inequality holds
\begin{equation}
\|Tx-Ty\|^2 \leq \|x-y\|^2 - \frac{1-\alpha}{\alpha}\|(I-T)x-(I-T)y\|, \quad \forall x,y\in H.
\end{equation}
Further, for any $\overline{\varepsilon} \in (0,1)$ and $\gamma \in (0,2\beta\overline{\varepsilon})$,
let $\overline{\alpha} =\frac {1} {2-\overline{\varepsilon}} < 1$. Then the following holds for all $x,y \in H$
\begin{align}
\|Tx-Ty\|^2 &\leq \|x-y\|^2 - \frac{1-\overline{\alpha}}{\overline{\alpha}}\|(I-T)x-(I-T)y\|^{2}\nonumber \\
            &-\gamma(2\beta-\frac {\gamma}{\overline{\varepsilon}})\|C\circ T_{2}x-C\circ T_{2}y\|^{2},
 \quad \forall x,y\in H.
 \end{align}

\end{proposition}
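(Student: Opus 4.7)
The plan is to prove both the averagedness of $T$ and the refined inequality by a single direct expansion using the firm nonexpansiveness of $T_1$, $T_2$ and the $\beta$-cocoercivity of $C$, and then closing the argument by completing a square.

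First I would fix $x,y\in H$ and introduce the shorthand
\[
a = x-y, \quad b = T_{2}x-T_{2}y, \quad c = T_{1}(2T_{2}x-x-\gamma C T_{2}x) - T_{1}(2T_{2}y-y-\gamma C T_{2}y), \quad d = CT_{2}x - CT_{2}y .
\]
Then $Tx-Ty = a-b+c$ and $(I-T)x-(I-T)y = b-c$, so a straightforward expansion yields the identity
\[
\|x-y\|^{2} - \|Tx-Ty\|^{2} = 2\langle a, b-c\rangle - \|b-c\|^{2}.
\]
The task reduces to lower-bounding $2\langle a,b-c\rangle$ in terms of $\|b-c\|^{2}$ and $\|d\|^{2}$.

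Second, I would invoke the three structural hypotheses, each giving one scalar inequality:
\textbf{(i)} firm nonexpansiveness of $T_{2}$ yields $\langle a,b\rangle \geq \|b\|^{2}$;
\textbf{(ii)} firm nonexpansiveness of $T_{1}$, applied at the inputs $2T_{2}x-x-\gamma CT_{2}x$ and $2T_{2}y-y-\gamma CT_{2}y$, yields $\langle 2b-a-\gamma d,\, c\rangle \geq \|c\|^{2}$, equivalently $-\langle a,c\rangle \geq -2\langle b,c\rangle + \gamma\langle d,c\rangle + \|c\|^{2}$;
\textbf{(iii)} $\beta$-cocoercivity of $C$ yields $\langle b,d\rangle \geq \beta\|d\|^{2}$.
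Adding twice (i) to twice (ii), I obtain
\[
2\langle a, b-c\rangle \;\geq\; 2\|b-c\|^{2} + 2\gamma\langle d,c\rangle,
\]
hence
\[
\|x-y\|^{2}-\|Tx-Ty\|^{2} \;\geq\; \|b-c\|^{2} + 2\gamma\langle d,c\rangle .
\]

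Third, I would rewrite $\langle d,c\rangle = \langle d,b\rangle - \langle d,b-c\rangle$ and use (iii) together with completion of a square. For the first assertion, with $\alpha = \tfrac{2\beta}{4\beta-\gamma}$ I have $\tfrac{1-\alpha}{\alpha} = 1-\tfrac{\gamma}{2\beta}$, so the desired bound reduces to
\[
\|b-c\|^{2} + 2\gamma\langle d,c\rangle - \left(1-\tfrac{\gamma}{2\beta}\right)\|b-c\|^{2} \;\geq\; 0,
\]
which after substituting $\langle d,c\rangle$ and using (iii) becomes $\tfrac{1}{2\beta}\,\|(b-c) - 2\beta d\|^{2} \geq 0$, and is therefore nonnegative. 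For the refined assertion with $\overline{\alpha} = \tfrac{1}{2-\overline{\varepsilon}}$, so $\tfrac{1-\overline{\alpha}}{\overline{\alpha}} = 1-\overline{\varepsilon}$, the very same substitution transforms the target inequality into
\[
\overline{\varepsilon}\,\|b-c\|^{2} - 2\gamma\langle d,b-c\rangle + \tfrac{\gamma^{2}}{\overline{\varepsilon}}\|d\|^{2} \;=\; \bigl\|\sqrt{\overline{\varepsilon}}\,(b-c) - \tfrac{\gamma}{\sqrt{\overline{\varepsilon}}}\,d\bigr\|^{2} \;\geq\; 0,
\]
so both claims follow from the same completion of a square, with the parameter $\overline{\varepsilon}$ tuning how much of the $\|d\|^{2}$ contribution is kept on the right-hand side.

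The only delicate point, and the step I expect to require the most care, is the bookkeeping in which the terms $2\gamma\langle d,c\rangle$, $\langle b,d\rangle$ and $\|b-c\|^{2}$ are combined so that a clean square emerges; the choice $\alpha = 2\beta/(4\beta-\gamma)$ (resp.\ $\overline{\alpha} = 1/(2-\overline{\varepsilon})$) is precisely what makes the cross terms cancel. Once the squares are identified, the rest is routine, and the final inequality of the proposition is exactly the characterisation of $\overline{\alpha}$-averagedness that was recalled after Definition 2.4.
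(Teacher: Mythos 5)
Your argument is correct and is essentially the standard Davis--Yin proof of this proposition, which the paper cites from \cite{davis2015} without reproducing: the decomposition $Tx-Ty=a-b+c$, the three scalar inequalities from firm nonexpansiveness of $T_1,T_2$ and cocoercivity of $C$, and the completion of the square are exactly the ingredients of the original argument. The only slip is cosmetic: in the first case the completed square should read $\tfrac{\gamma}{2\beta}\|(b-c)-2\beta d\|^{2}\geq 0$ rather than $\tfrac{1}{2\beta}\|(b-c)-2\beta d\|^{2}$, which is harmless since $\gamma>0$.
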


The Krasnoselskii-Mann (KM) iteration scheme plays an important role in studying various fixed point algorithms arising in signal and image processing. Let $x^0\in H$, the KM iteration is defined as follow
\begin{equation}\label{KM-iteration}
x^{k+1} = (1-\rho_k)x^k + \rho_k Tx^k, \quad  k \geq 0,
\end{equation}
where $\rho_k \in (0,1)$. In order to accelerate the KM iteration (\ref{KM-iteration}), the inertial Krasnoselskii-Mann (iKM) iteration was introduced. Let $x^{0}, x^{-1}\in H$, the iKM iteration scheme reads as
\begin{equation}\label{inertial-KM-iteration}
\left \{
\begin{aligned}
y^k & = x^k + \alpha_k (x^k - x^{k-1}), \\
x^{k+1} & = (1-\rho_k)y^k + \rho_k Ty^k,\quad k \geq 0,
\end{aligned}\right.
\end{equation}
where $\rho_k \in (0,1)$ and $\alpha_k \in (0,1)$. The following results concerned with the convergence analysis of the iKM iteration (\ref{inertial-KM-iteration}).

Maing\'{e} \cite{Mainge2008JCAM} proved the following convergence result of the iKM iteration (\ref{inertial-KM-iteration}).

\begin{lemma}(\cite{Mainge2008JCAM})\label{iKM-convergence-1}
Let $H$ be a real Hilbert space. Let $T:H\rightarrow H$ be a nonexpansive operator such that $Fix (T)\neq \varnothing $. Let $\{x^k\}$ be generated by (\ref{inertial-KM-iteration}), where $\{\rho_k\}\subset (0,1)$ and $\{\alpha_k\}\subset [0,1)$ satisfy the following conditions:

\noindent \emph{(i)} $0 \leq \alpha_{k} \leq \alpha < 1 $, $ 0<  \underline{\lambda} \leq \lambda_{k} \leq \overline{\lambda} <1$.

\noindent \emph{(ii)} $\sum_{k=0}^{+\infty}\alpha_{k}\|x^{k}-x^{k-1}\|^{2} < +\infty$.

\noindent Then

\noindent \emph{(a)} For any $x^{*} \in Fix(T), \lim_{k\rightarrow +\infty}\|x^{k}-x^{*}\|$ exists;

\noindent \emph{(b)} $\{x^{k}\}$ converges weakly to a point in $Fix(T)$.

\end{lemma}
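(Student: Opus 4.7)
The plan is to carry out a Fej\'er-style analysis adapted to the inertial step, and then invoke Opial's lemma to upgrade the resulting boundedness and fixed-point consistency into weak convergence. Fix $x^{*}\in Fix(T)$ and write $\varphi_{k}:=\|x^{k}-x^{*}\|^{2}$. The technical heart of the argument will be to establish the one-step inertial recursion
\begin{equation*}
\varphi_{k+1} \leq \varphi_{k} + \alpha_{k}(\varphi_{k}-\varphi_{k-1}) + \sigma_{k} - \rho_{k}(1-\rho_{k})\|Ty^{k}-y^{k}\|^{2},
\end{equation*}
with $\sigma_{k} := \alpha_{k}(1+\alpha_{k})\|x^{k}-x^{k-1}\|^{2}$, which is summable by hypothesis (ii).

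First I would use that $(1-\rho_{k})I+\rho_{k}T$ is $\rho_{k}$-averaged (because $T$ is nonexpansive), giving $\|x^{k+1}-x^{*}\|^{2}\leq \|y^{k}-x^{*}\|^{2}-\rho_{k}(1-\rho_{k})\|Ty^{k}-y^{k}\|^{2}$, and then expand $\|y^{k}-x^{*}\|^{2}$ via the identity $2\langle x^{k}-x^{*},x^{k}-x^{k-1}\rangle = \varphi_{k}-\varphi_{k-1}+\|x^{k}-x^{k-1}\|^{2}$ to produce the displayed recursion. Setting $\theta_{k}:=[\varphi_{k}-\varphi_{k-1}]_{+}$, taking positive parts of the recursion yields $\theta_{k+1}\leq \alpha\,\theta_{k}+\sigma_{k}$; iterating this and using $\alpha<1$ together with $\sum_{k}\sigma_{k}<\infty$ gives $\sum_{k}\theta_{k}<\infty$. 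Because $\varphi_{k}\geq 0$, this bounded positive variation forces $\lim_{k}\varphi_{k}$ to exist, which is part (a). Rearranging the same recursion now yields $\rho_{k}(1-\rho_{k})\|Ty^{k}-y^{k}\|^{2} \leq (\varphi_{k}-\varphi_{k+1})+\alpha\,\theta_{k}+\sigma_{k}$, whose right-hand side is summable, and the assumption $0<\underline{\lambda}\leq \rho_{k}\leq \overline{\lambda}<1$ then delivers $\|Ty^{k}-y^{k}\|\to 0$.

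For part (b) I would apply Opial's lemma. The first hypothesis is exactly part (a). For the second, the identity $\|y^{k}-x^{k}\|=\alpha_{k}\|x^{k}-x^{k-1}\|$ combined with $\alpha_{k}^{2}\|x^{k}-x^{k-1}\|^{2}\leq \alpha\cdot \alpha_{k}\|x^{k}-x^{k-1}\|^{2}$ (summable by (ii)) gives $\|y^{k}-x^{k}\|\to 0$; the triangle inequality together with nonexpansiveness of $T$ and $\|Ty^{k}-y^{k}\|\to 0$ then yields $\|x^{k}-Tx^{k}\|\to 0$. Demiclosedness of $I-T$ at $0$ places every weak cluster point of $(x^{k})$ in $Fix(T)$, and Opial's lemma completes the proof. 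The chief obstacle is the inertial recursion together with the bounded-positive-variation step $\theta_{k+1}\leq \alpha\,\theta_{k}+\sigma_{k}$: this is the inertial-specific ingredient that distinguishes the argument from the standard nonexpansive KM analysis, in which $(\varphi_{k})$ is automatically monotone rather than merely of bounded positive variation, and it is precisely here that hypothesis (ii) is consumed.
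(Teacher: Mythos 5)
Your proof is correct. Note that the paper states this lemma as a cited result from Maing\'{e} \cite{Mainge2008JCAM} and gives no proof of its own, so there is nothing internal to compare against; your argument --- the one-step inertial recursion $\varphi_{k+1}\leq\varphi_k+\alpha_k(\varphi_k-\varphi_{k-1})+\sigma_k-\rho_k(1-\rho_k)\|Ty^k-y^k\|^2$, the bounded-positive-variation estimate $\theta_{k+1}\leq\alpha\theta_k+\sigma_k$ consuming hypothesis (ii), and the conclusion via $\|Ty^k-y^k\|\to 0$, demiclosedness of $I-T$, and Opial's lemma --- is exactly the standard Alvarez--Attouch/Maing\'{e} argument used in the cited source.
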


Bo\c{t} et al. \cite{Bot2015AMC} removed the condition (ii) in Lemma \ref{iKM-convergence-1}, but needed stronger conditions on $\{\alpha_k\}$ and $\{\lambda_k\}$. They proved the following convergence result.

\begin{lemma}(\cite{Bot2015AMC})\label{iKM-convergence-2}
Let $H$ be a real Hilbert space. Let $T:H\rightarrow H$ be a nonexpansive operator such that $Fix (T)\neq \varnothing $. Let $\{x^k\}$ be generated by (\ref{inertial-KM-iteration}), where $\{\rho_k\}\subset (0,1)$ and $\{\alpha_k\}\subset [0,1)$ satisfy the following conditions:

\noindent \emph{(i)} $\{\alpha_{k}\}_{k\geq 1}$ is nondecreasing  with $\alpha_{1}=0$ and $0\leq \alpha_{k} \leq \alpha <1$
for every $k\geq 1$;

\noindent \emph{(ii)} Let $\lambda, \sigma, \delta > 0 $ such that
\begin{equation}
\delta > \frac{\alpha^{2}(1+\alpha)+\alpha \sigma}{1-\alpha^{2}} \quad and \quad0<\lambda \leq \lambda_{k}\leq \frac{\delta-\alpha[\alpha (1+\alpha)+\alpha \delta+\sigma]}{\delta[1+\alpha (1+\alpha)+\alpha \delta+\sigma]}
\end{equation}

\noindent Then the following hold:

\noindent \emph{(a)} For any $y \in Fix(T), \lim_{k\rightarrow +\infty}\|x^{k}-y\|$ exists; \\
\noindent \emph{(b)} $\sum _{k=0}^{\infty}\|x^{k+1}-x^{k}\|^{2} < +\infty$; \\
\noindent \emph{(c)} $\{x^{k}\}$ converges weakly to a point in $Fix(T)$.

\end{lemma}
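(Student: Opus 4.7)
The plan is to build a discrete Lyapunov function $\varphi_{k}$ whose monotone decrease simultaneously yields parts (a) and (b), and then to invoke the Opial lemma together with demiclosedness of $I-T$ at $0$ for part (c). Throughout, fix $y\in \textrm{Fix}(T)$ and write $\phi_{k}:=\|x^{k}-y\|^{2}$.

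First I would derive a one-step recursion for $\phi_{k}$. Since $T$ is nonexpansive, the Krasnoselskii--Mann step $x^{k+1}=(1-\lambda_{k})y^{k}+\lambda_{k}Ty^{k}$ gives
\[
\|x^{k+1}-y\|^{2}\leq \|y^{k}-y\|^{2}-\lambda_{k}(1-\lambda_{k})\|Ty^{k}-y^{k}\|^{2}.
\]
Expanding $\|y^{k}-y\|^{2}$ via the identity $2\langle x^{k}-y,x^{k}-x^{k-1}\rangle=\phi_{k}-\phi_{k-1}+\|x^{k}-x^{k-1}\|^{2}$ produces
\[
\phi_{k+1}-\phi_{k}\leq \alpha_{k}(\phi_{k}-\phi_{k-1})+\alpha_{k}(1+\alpha_{k})\|x^{k}-x^{k-1}\|^{2}-\lambda_{k}(1-\lambda_{k})\|Ty^{k}-y^{k}\|^{2}.
\]
Using $x^{k+1}-x^{k}=\lambda_{k}(Ty^{k}-y^{k})+\alpha_{k}(x^{k}-x^{k-1})$ together with Young's inequality, calibrated by the free parameter $\sigma$ appearing in (ii), I would rewrite $\|Ty^{k}-y^{k}\|^{2}$ in terms of $\|x^{k+1}-x^{k}\|^{2}$ and $\|x^{k}-x^{k-1}\|^{2}$.

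Next I would introduce the Lyapunov sequence
\[
\varphi_{k}:=\phi_{k}-\alpha_{k}\phi_{k-1}+\delta\|x^{k}-x^{k-1}\|^{2},
\]
and show that under assumption (ii) there exists $\rho>0$ such that $\varphi_{k+1}-\varphi_{k}\leq -\rho\|x^{k+1}-x^{k}\|^{2}$. This step is the heart of the proof: the precise lower bound on $\delta$ and upper bound on $\lambda_{k}$ in (ii) are dictated exactly by the requirement that the cross terms in the resulting inequality collapse into this clean descent. The monotonicity of $\{\alpha_{k}\}$ is used here to pass from $\alpha_{k}\phi_{k-1}$ to $\alpha_{k+1}\phi_{k}$ while preserving the sign of the leading terms. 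Telescoping the descent inequality yields $\sup_{k}\phi_{k}<+\infty$, $\sum_{k}\|x^{k+1}-x^{k}\|^{2}<+\infty$ (which is (b)), and convergence of $\{\varphi_{k}\}$, hence of $\phi_{k}-\alpha_{k}\phi_{k-1}$; a standard Alvarez--Attouch type lemma on nonnegative recursions then yields the existence of $\lim_{k}\phi_{k}$, proving (a).

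For (c), the summability (b) gives $\|x^{k+1}-x^{k}\|\to 0$, hence $\|y^{k}-x^{k}\|=\alpha_{k}\|x^{k}-x^{k-1}\|\to 0$. Combined with $x^{k+1}-y^{k}=\lambda_{k}(Ty^{k}-y^{k})$ and $\lambda_{k}\geq \lambda>0$, this forces $\|Ty^{k}-y^{k}\|\to 0$. By (a) the sequence $\{x^{k}\}$ is bounded; for any subsequence with $x^{k_{j}}\rightharpoonup x^{\ast}$ we also have $y^{k_{j}}\rightharpoonup x^{\ast}$, and demiclosedness of $I-T$ at $0$ gives $x^{\ast}\in \textrm{Fix}(T)$. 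Since (a) holds for every $y\in \textrm{Fix}(T)$, the Opial lemma then upgrades this cluster-point characterization to weak convergence of the whole sequence.

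The main obstacle is the derivation of the Lyapunov descent inequality in the second paragraph. Two Young-inequality estimates have to be tuned simultaneously through $\sigma$ and $\delta$ so that all residual coefficients become nonpositive exactly when $\lambda_{k}$ satisfies the upper bound in (ii); this is purely algebraic but delicate. Once the descent is secured, the rest of the proof is a routine application of the Fej\'er/Opial/demiclosedness machinery for nonexpansive operators.
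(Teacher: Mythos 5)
This lemma is stated in the paper without proof, being quoted directly from Bo\c{t}, Csetnek and Hendrich \cite{Bot2015AMC}, so the only meaningful comparison is with the argument in that reference. Your outline reproduces that argument essentially verbatim --- the one-step recursion for $\phi_k=\|x^k-y\|^2$, the Lyapunov sequence $\varphi_k=\phi_k-\alpha_k\phi_{k-1}+\delta\|x^k-x^{k-1}\|^2$ whose descent is calibrated by $\sigma$ and $\delta$ exactly as in condition (ii), the Alvarez--Attouch recursion lemma for part (a), and Opial's lemma plus demiclosedness of $I-T$ for part (c) --- and it is correct, with the sole caveat that the decisive algebraic verification that (ii) renders all residual coefficients nonpositive is asserted rather than carried out.
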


\begin{figure}[htbp]
     \setlength{\abovecaptionskip}{-5pt}
  \centering \scalebox{0.6}{\includegraphics{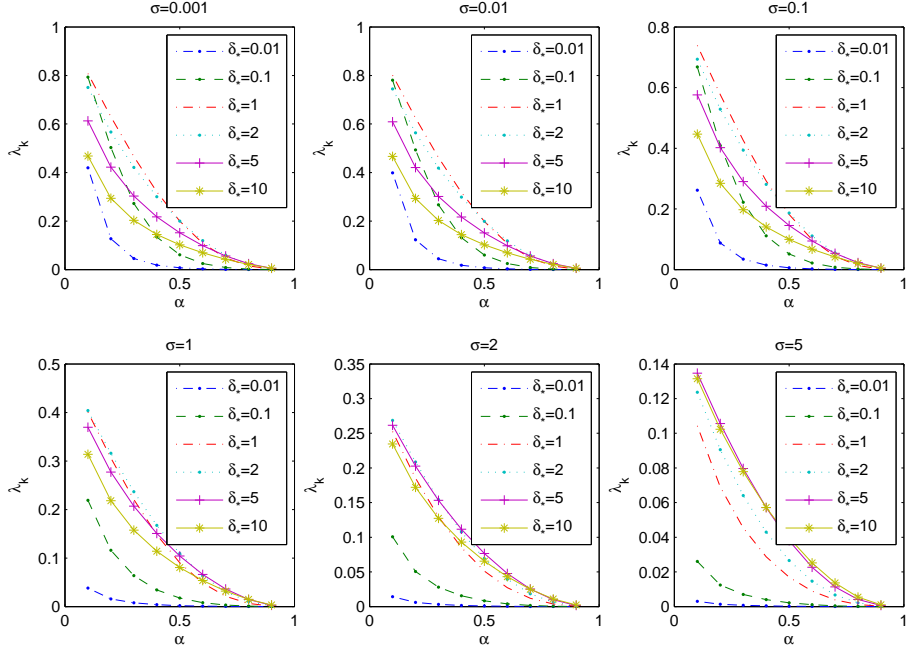}}
  \caption[]{The relaxation parameters versus the given constants of  $\delta_{*}, \sigma$ and $\alpha$.}%
{\label{parameter-range}}
\end{figure}

\begin{remark}
The inertial parameters $\{\alpha_k\}$ in Lemma \ref{iKM-convergence-1} have to be calculated based on the current iteration and the last iteration. In Lemma \ref{iKM-convergence-2}, the relaxation parameters $\{\lambda_k\}$ are restricted by the constants $\delta, \sigma$ and $\alpha$. To show the impact of these parameters to the relaxation parameters, we plot the range of the relaxation parameters $\{\lambda_k\}$ in Figure \ref{parameter-range}. Here, we let $\delta = \frac{\alpha^{2}(1+\alpha)+\alpha \sigma}{1-\alpha^{2}} + \delta_{*}$. To get a large selection of the relaxation parameters $\{\lambda_k\}$, we recommend to set $\sigma = 0.01$ or $\sigma = 0.001$ and $\delta_{*}=1$. From Figure \ref{parameter-range}, it is obvious that the relaxation parameters $\{\lambda_k\}$ are decreasing as the maximal inertial parameter $\alpha$ increasing.
\end{remark}

Finally, we close this section by introducing some elements in convex analysis. See \cite{Zalinescu2012}. Let $f:H\rightarrow (-\infty, +\infty]$. We denote that $\textrm{ gra } f = \{x\in H | f(x)< +\infty\}$ is effective domain of $f$. We call $f$ is proper if $\textrm{ gra } f \neq \emptyset$. Let $\Gamma_{0}(H)$ be the
class of proper lower semicontinuous convex functions from $H$ to $(-\infty, + \infty]$. The subdifferential of $f:H\rightarrow (-\infty, +\infty]$ is the set $\partial f(x) = \{u\in H | f(y)\geq f(x) + \langle u, y-x\rangle, \forall y\in H\}$.

\begin{definition}(\cite{Zalinescu2012})
Let $f\in \Gamma_{0}(H)$. $f$ is called uniformly convex, if there exists an increasing function $\phi: [0,+\infty) \rightarrow [0,+\infty]$ that vanishes only at $0$ such that, for every $\alpha\in (0,1)$ and every $x,y\in \textrm{ dom } f$,
$$
f(\alpha x + (1-\alpha)y) \leq \alpha f(x) + (1-\alpha)f(y) - \alpha (1-\alpha)\phi (\|x-y\|).
$$
\end{definition}

\begin{definition}(\cite{Zalinescu2012})
Let $f\in \Gamma_{0}(H)$. Let $u\in H$. The proximity operator of $f$ with index $\lambda >0$ is defined by
$$
prox_{\lambda f}(u) = \arg\min_{x} \{ \frac{1}{2\lambda} \|x-u\|^2 + f(x) \}.
$$
\end{definition}
Notice that $J_{\lambda \partial f} = prox_{\lambda f}$. The proximity operator is a generalization of the projection operator $P_{C}$, where $C$ is a nonempty closed convex set.

%%%%%%%%%%%%%%%%%%%%%%%%%%%%%%%%%%%%%%%%%%%%%%%%%%%%%%%%%%%%%%%%%%%%%%%%%%%%%%%%%%%%%%%%%%%%%%%%
\section{An inertial three-operator splitting algorithm}
\vskip 3mm

In this section, we propose an inertial three-operator splitting algorithm to solve the monotone inclusion problem (\ref{more two-monotone-inclusion}). This algorithm combines the inertial iterative algorithm (\ref{inertial-KM-iteration}) with the three-operator splitting algorithm \cite{davis2015}. The detailed iterative algorithm is presented in Algorithm \ref{alg1}.

\renewcommand{\algorithmicrequire}{\textbf{Input:}}
\renewcommand{\algorithmicensure}{\textbf{Output:}}
\begin{algorithm}[htb]
\caption{ An inertial three-operator splitting (iTOS) algorithm}
\label{alg1}
\begin{algorithmic}[1]
\REQUIRE
For any given $z^{0}$, $z^{-1}\in H$. Choose $\gamma, \lambda_{k}$ and $\alpha_{k}$. \\

For $k=0,1,2,\cdots$, do
\quad  \STATE $y^{k}=z^{k}+\alpha_{k}(z^{k}-z^{k-1})$;
\quad  \STATE $x_{B}^{k}=J_{\gamma B}y^{k}$;
\quad \STATE $x_{A}^{k}=J_{\gamma A}(2x_{B}^{k}-y^{k}-\gamma Cx_{B}^{k})$;
\quad \STATE $z^{k+1}=y^{k}+\lambda _{k}(x_{A}^{k}-x_{B}^{k})$.

End for when some stopping criterion is met.
\end{algorithmic}
\end{algorithm}

Let $\alpha_k =0$, then the inertial three-operator splitting algorithm reduces to the original three-operator splitting algorithm introduced by Davis and Yin \cite{davis2015}. Notice that most of the operator splitting algorithms can be written as KM iteration scheme (\ref{KM-iteration}) for computing fixed points of nonexpansive operators. The iKM iteration (\ref{inertial-KM-iteration}) is useful for developing various inertial operator splitting algorithms. Therefore, we shall make full use of Lemma \ref{iKM-convergence-1} and Lemma \ref{iKM-convergence-2} to prove the convergence of Algorithm \ref{alg1}. Now, we are ready to prove the first convergence theorem of Algorithm \ref{alg1}.

\begin{theorem}\label{main-theorem}
Let $H$ be a real Hilbert space. Let $A:H \rightarrow 2^{H}$ and $B:H\rightarrow 2^H$ be two maximally monotone operators. Let $C:H\rightarrow H$ be a $\beta$-cocoercive operator, for some $\beta> 0$. Define an operator $T:H\rightarrow H$ as follows
\begin{equation}\label{def-T}
T:= I-J_{\gamma B}+J_{\gamma A}(2J_{\gamma B}-I-\gamma C J_{\gamma B}).
\end{equation}
Let the iterative sequences $\{z^{k}\}, \{x_{A}^{k}\}$, and $\{x_{B}^{k}\}$ are generated by Algorithm \ref{alg1}. Assume that the parameters $\gamma, \{\alpha_{k}\}$, and $\{\lambda_{k}\}$ satisfy the following conditions:

\noindent \emph{(i1)} $\gamma \in (0, 2\beta \overline{\varepsilon})$, where $\overline{\varepsilon} \in (0,1)$.

\noindent \emph{(i2)} $\{\alpha_{k}\}$ is nondecreasing with $k\geq 1, \alpha_{1}=0$ and $ 0\leq \alpha_{k} \leq \alpha < 1 $.

\noindent \emph{(i3)}  for every $k\geq 1$, and $\lambda, \sigma, \delta > 0 $ such that
\begin{equation}
\delta > \frac{\alpha^{2}(1+\alpha)+\alpha \sigma}{1-\alpha^{2}} \textrm{ and }\, 0<\lambda \leq \lambda_{k}\leq \frac{\delta-\alpha[\alpha (1+\alpha)+\alpha \delta+\sigma]}{\overline{\alpha}\delta[1+\alpha (1+\alpha)+\alpha \delta+\sigma]},
\end{equation}
where $\bar{\alpha} = \frac{1}{2-\overline{\varepsilon}}$. Then the following hold

\noindent \emph{(i)} $\{z^{k}\}$ converges weakly to a fixed point of $T$.

\noindent \emph{(ii)} Let $\lambda_{k}\geq \underline{\lambda}> 0 $ and $z^{*}$ be a fixed point of $T$. Then $\{x_{B}^{k}\}$ converges weakly to $ J_{\gamma B}z^{*}\in zer(A+B+C)$.

\noindent \emph{(iii)} Let $\lambda_{k}\geq \underline{\lambda}> 0 $ and $z^{*}$ be a fixed point of $T$. Then $\{x_{A}^{k}\}$ converges weakly to $ J_{\gamma B}z^{*}\in zer(A+B+C)$.

\noindent \emph{(iv)} Let $\lambda_{k}\geq \underline{\lambda}> 0 $ and $z^{*}$ be a fixed point of $T$. Suppose that one of the following conditions hold

\noindent \emph{(a)} $A$ is uniformly monotone on every nonempty bounded subset of $\textrm{ dom } A$.\\
\noindent \emph{(b)} $B$ is uniformly monotone on every nonempty bounded subset of $\textrm{ dom } B$.\\
\noindent \emph{(c)} $C$ is demiregular at every point $x \in zer(A+B+C)$. \\
Then $\{x_{A}^{k}\}$ and $\{x_{B}^{k}\}$ converge strongly to $J_{\gamma B}z^{*}\in zer(A+B+C)$.
\end{theorem}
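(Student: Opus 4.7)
The plan is to view Algorithm \ref{alg1} as the inertial Krasnoselskii-Mann recursion (\ref{inertial-KM-iteration}) applied to the operator $T$ of (\ref{def-T}) and then invoke Lemma \ref{iKM-convergence-2} together with the enhanced averagedness inequality of Proposition \ref{three-operator-prop}. The key observation is that steps 2--3 of Algorithm \ref{alg1} give $Ty^{k}=y^{k}-x_{B}^{k}+x_{A}^{k}$, so step 4 reads $z^{k+1}=(1-\lambda_{k})y^{k}+\lambda_{k}Ty^{k}$. Since Proposition \ref{three-operator-prop} makes $T$ $\overline{\alpha}$-averaged with $\overline{\alpha}=1/(2-\overline{\varepsilon})$, I would write $T=(1-\overline{\alpha})I+\overline{\alpha}N$ with $N$ nonexpansive and $\textrm{Fix}(N)=\textrm{Fix}(T)$. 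The recursion then becomes $z^{k+1}=(1-\lambda_{k}\overline{\alpha})y^{k}+\lambda_{k}\overline{\alpha}Ny^{k}$, i.e.\ iteration (\ref{inertial-KM-iteration}) for $N$ with relaxation $\lambda_{k}':=\lambda_{k}\overline{\alpha}$. The upper bound on $\lambda_{k}$ in (i3) is exactly the Lemma \ref{iKM-convergence-2}(ii) bound on $\lambda_{k}'$, so Lemma \ref{iKM-convergence-2} applied to $N$ yields (i), together with $\lim_{k}\|z^{k}-z^{*}\|$ existing and $\sum_{k}\|z^{k+1}-z^{k}\|^{2}<\infty$.

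From $\sum_{k}\|z^{k+1}-z^{k}\|^{2}<\infty$ and $\alpha_{k}\leq\alpha<1$ I obtain $\|y^{k}-z^{k}\|=\alpha_{k}\|z^{k}-z^{k-1}\|\to 0$, hence $y^{k}\rightharpoonup z^{*}$, and (under $\lambda_{k}\geq\underline{\lambda}>0$) $\|x_{A}^{k}-x_{B}^{k}\|\leq\|z^{k+1}-y^{k}\|/\underline{\lambda}\to 0$. Setting $x^{*}:=J_{\gamma B}z^{*}$, Lemma \ref{three-operator-lemma} gives $x^{*}\in\textrm{zer}(A+B+C)$ with $z^{*}-x^{*}=\gamma\mu^{*}$ for some $\mu^{*}\in Bx^{*}\cap(-Ax^{*}-Cx^{*})$. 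To prove (ii)--(iii) I would plug the enhanced inequality of Proposition \ref{three-operator-prop} into the Fej\'er argument underpinning Lemma \ref{iKM-convergence-2}; the extra negative term $-\gamma(2\beta-\gamma/\overline{\varepsilon})\|CJ_{\gamma B}y^{k}-Cx^{*}\|^{2}$ survives the telescoping and delivers the strong convergence $Cx_{B}^{k}\to Cx^{*}$. Then, for any weak cluster point $\bar{x}$ of $\{x_{B}^{k}\}$ along a subsequence $(k_{j})$, the inclusions $(y^{k_{j}}-x_{B}^{k_{j}})/\gamma\in Bx_{B}^{k_{j}}$ and $(2x_{B}^{k_{j}}-y^{k_{j}}-\gamma Cx_{B}^{k_{j}}-x_{A}^{k_{j}})/\gamma\in Ax_{A}^{k_{j}}$, combined with monotonicity of $A,B,C$, the vanishing residual $x_{B}^{k}-x_{A}^{k}\to 0$, and the strong convergence $Cx_{B}^{k}\to Cx^{*}$, allow passage to the limit in a demi-closedness test against any $(u,v)\in\textrm{gra}(A+B+C)$ to deduce $\bar{x}\in\textrm{zer}(A+B+C)$. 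An Opial-type argument anchored at $x^{*}$, using the firm-nonexpansiveness inequality $\|x_{B}^{k}-x^{*}\|^{2}\leq\|y^{k}-z^{*}\|^{2}-\|y^{k}-x_{B}^{k}-\gamma\mu^{*}\|^{2}$ and the strong control on $Cx_{B}^{k}$, then pins down $\bar{x}=x^{*}$, yielding (ii); assertion (iii) follows from $\|x_{A}^{k}-x_{B}^{k}\|\to 0$.

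For (iv) I would insert the extra hypothesis into the Fej\'er-type estimate. Choose selections $a^{*}\in Ax^{*}$ and $b^{*}\in Bx^{*}$ with $a^{*}+b^{*}+Cx^{*}=0$, and set $a^{k}:=(2x_{B}^{k}-y^{k}-\gamma Cx_{B}^{k}-x_{A}^{k})/\gamma\in Ax_{A}^{k}$, $b^{k}:=(y^{k}-x_{B}^{k})/\gamma\in Bx_{B}^{k}$. Under (a), uniform monotonicity of $A$ sharpens the monotonicity estimate used in Proposition \ref{three-operator-prop} to $\langle x_{A}^{k}-x^{*},a^{k}-a^{*}\rangle\geq\psi_{A}(\|x_{A}^{k}-x^{*}\|)$; this contributes an extra negative term $-2\gamma\lambda_{k}\psi_{A}(\|x_{A}^{k}-x^{*}\|)$ to the Lyapunov decrement, whose summability forces $\psi_{A}(\|x_{A}^{k}-x^{*}\|)\to 0$ and hence $x_{A}^{k}\to x^{*}$ strongly, whereupon $x_{B}^{k}\to x^{*}$ by $\|x_{A}^{k}-x_{B}^{k}\|\to 0$. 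Case (b) is parallel with $b^{k}$ in place of $a^{k}$. Under (c), the demiregularity of $C$ at $x^{*}$, applied to $x_{B}^{k}\rightharpoonup x^{*}$ together with the already established $Cx_{B}^{k}\to Cx^{*}$, yields $x_{B}^{k}\to x^{*}$ directly, and then $x_{A}^{k}\to x^{*}$.

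The principal obstacle throughout is engineering the Lyapunov inequality so that the enhanced negative terms of Proposition \ref{three-operator-prop} are not swallowed by the inertial remainder $\alpha_{k}\|z^{k}-z^{k-1}\|^{2}$ carried over from the proof of Lemma \ref{iKM-convergence-2}; once this is done, the strong convergence $Cx_{B}^{k}\to Cx^{*}$ is in hand, and the Opial-type uniqueness argument that forces every weak cluster point of $\{x_{B}^{k}\}$ to coincide with the specific point $J_{\gamma B}z^{*}$ (rather than some other zero of $A+B+C$) completes the identification needed for (ii)--(iv).
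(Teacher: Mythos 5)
Your proposal is correct and follows essentially the same route as the paper: rewrite Algorithm \ref{alg1} as the inertial KM iteration for the $\overline{\alpha}$-averaged operator $T$ (equivalently, for the nonexpansive $N$ with relaxation $\lambda_k\overline{\alpha}$), invoke Lemma \ref{iKM-convergence-2} for part (i), extract $\|Ty^k-y^k\|\to 0$ and $Cx_B^k\to Cx^*$ from the refined inequality of Proposition \ref{three-operator-prop}, identify the weak cluster point via the graph inclusions for $\mu_A^k,\mu_B^k$, and run the uniform-monotonicity/demiregularity estimates for (iv). The only cosmetic difference is that the paper avoids your worry about the inertial remainder swallowing the extra negative terms by first establishing $\lim_k\|y^k-z^*\|=\lim_k\|z^k-z^*\|$ and then reading the vanishing of those terms directly off inequality (\ref{eq3-5}), and it identifies the cluster point as $J_{\gamma B}z^*$ directly from $\tfrac{1}{\gamma}(z^*-y)\in By$ rather than through a separate Opial argument.
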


\begin{proof}
The iterative sequence $\{z^k\}$ generated by Algorithm \ref{alg1} is equivalent to
\begin{equation}\label{eq3-1}
z^{k+1} =(1- \lambda_{k})y^{k}+\lambda_{k}Ty^{k}.
\end{equation}
It follows from Proposition \ref{three-operator-prop} that $T$ is $\overline{\alpha}$-averaged. Then there exists a nonexpansive operator $R$ such that $T=(1-\overline{\alpha})I+\overline{\alpha} R$.
Hence, we obtain from (\ref{eq3-1}) that
\begin{align}\label{eq3-2}
z^{k+1} & = (1- \lambda_{k})y^{k}+\lambda_{k}((1-\overline{\alpha})y^{k}+ \overline{\alpha} Ry^{k})\nonumber \\
       &  = (1- \lambda_{k}\overline{\alpha})y^{k}+\lambda_{k}\overline{\alpha} Ry^{k}.
\end{align}

Notice that $Fix(T)=Fix(R)\neq \emptyset$, and the conditions of (i1) and (i2) imply that all the conditions of Lemma \ref{iKM-convergence-1} are satisfied. Then we obtain that
for any $y \in Fix(R)$, $\lim_{k\rightarrow +\infty}\|z^{k}-y\|$ exists. Moreover, $\sum _{k=0}^{\infty}\|z^{k+1}-z^{k}\|^{2} < +\infty$ and  $\{z^{k}\}$ converges weakly to a point in $Fix(R)$.

\noindent (i)\, Since $Fix(T)=Fix(R)$, then we obtain the conclusion of (i).

\noindent (ii)\, Let $z^{*} \in Fix(T)$, from (\ref{eq3-1}), we have
\begin{align}\label{eq3-3}
\|z^{k+1}-z^{*}\|^{2} &=\|(1- \lambda_{k})(y^{k}-z^{*})+\lambda_{k}(Ty^{k}-z^{*})\|^{2}\nonumber \\
                      &=(1- \lambda_{k})\|y^{k}-z^{*}\|^{2}+\lambda_{k}\|Ty^{k}-z^{*}\|^{2}-\lambda_{k}(1- \lambda_{k})\|y^{k}-Ty^{k}\|^{2}.
\end{align}
Let $T_1 = J_{\gamma A}$ and $T_2 = J_{\gamma B}$ in Proposition \ref{three-operator-prop}, then we have
\begin{align}\label{eq3-4}
\|Ty^{k}-z^{*}\|^{2} &\leq \|y^{k}-z^{*}\|^{2}-\frac{(1-\overline{\alpha})}{\overline{\alpha}}\|(I-T)y^{k}-(I-T)z^{*}\|^{2}\nonumber\\
 &-\gamma (2\beta-\frac {\gamma}{\overline{\epsilon}})\|CJ_{\gamma B}(y^{k})- CJ_{\gamma B}(z^{*})\|^{2}.
\end{align}
Substituting (\ref{eq3-4}) into (\ref{eq3-3}), we obtain
\begin{align}\label{eq3-5}
\|z^{k+1}-z^{*}\|^{2} &\leq \|y^{k}-z^{*}\|^{2}- \lambda_{k}(\frac{1}{\overline{\alpha}}-\lambda_{k})\|Ty^{k}-y^{k}\|^{2}\nonumber\\
 &-\lambda_{k}\gamma (2\beta-\frac {\gamma}{\epsilon})\|CJ_{\gamma B}(y^{k})- CJ_{\gamma B}(z^{*})\|^{2}.
\end{align}
Next, we prove $\lim_{k\rightarrow +\infty}\|y^{k}-z^{*}\|=\lim_{k\rightarrow +\infty}\|z^{k}-z^{*}\|.$ In fact,
\begin{align}\label{eq3-6}
\|y^{k}-z^{*}\| &=\|z^{k}+\alpha_{k}(z^{k}-z^{k-1})-z^{*} \nonumber\|\\
                &\leq\|z^{k}-z^{*}\|+\alpha_{k}\|z^{k}-z^{k-1}\|.
\end{align}
On the other hand,
\begin{align}\label{eq3-7}
\|z^{k+1}-z^{*}\| &=\|(1-\lambda_{k})(y^{k}-z^{*})+\lambda_{k}(Ty^{k}-z^{*})\| \nonumber\\
                &\leq\|y^{k}-z^{*}\|.
\end{align}
Observe that $\lim_{k\rightarrow +\infty}\|z^{k}-z^{k-1}\|=0$ and $\lim_{k\rightarrow +\infty}\|z^{k}-z^{*}\|$ exists.
We can conclude from (\ref{eq3-6}) and (\ref{eq3-7}) that $\lim_{k\rightarrow +\infty}\|y^{k}-z^{*}\|=\lim_{k\rightarrow +\infty}\|z^{k}-z^{*}\|$.
Therefore, from (\ref{eq3-5}), we obtain
\begin{equation}\label{eq3-8}
\lim_{k\rightarrow +\infty}\|Ty^{k}-y^{k}\|=0,
\end{equation}
and
\begin{equation}\label{eq3-9}
\lim_{k\rightarrow +\infty}\|CJ_{\gamma B}(y^{k})- CJ_{\gamma B}(z^{*})\|=0.
\end{equation}

Let $\mu_{B}^{k}=\frac {1}{\gamma}(y^{k}-x_{B}^{k})\in Bx_{B}^{k}$. $\mu_{A}^{k}=\frac {1}{\gamma}(2x_{B}^{k}-y^{k}-\gamma C x_{B}^{k}-x_{A}^{k}
)\in Ax_{A}^{k}$. It follows from the nonexpansiveness of $J_{\gamma B}$, we have
\begin{align}\label{eq3-10}
\|x_{B}^{k}-J_{\gamma B}(z^{*})\| &\leq\|y^{k}-z^{*}\| \nonumber \\
                &\leq\|z^{k}-z^{*}\|+\alpha_{k}\|z^{k}-z^{k-1}\|\nonumber\\
                &\leq\|z^{k}-z^{*}\|+\alpha\|z^{k}-z^{k-1}\|.
\end{align}
Notice that
$\lim_{k\rightarrow +\infty}\|z^{k}-z^{*}\|$ exists and $\lim_{k\rightarrow +\infty}\|z^{k}-z^{k-1}\|=0$, then $\{x_{B}^{k}\}$ is bounded.
Let $y$ is a sequential weak cluster point of $\{x_{B}^{k}\}$. That is, there exists a subsequence $\{x_{B}^{k_{n}}\}$ such that $x_{B}^{k_{n}}\rightharpoonup y$ as $k_{n}\rightarrow +\infty$.
Let $x^{*}=J_{\gamma B}(z^{*})$. Then $x^{*}\in zer(A+B+C)$.
By (\ref{eq3-9}), we have $Cx_{B}^{k}\rightarrow Cx^{*}$. Notice that $x_{B}^{k_{n}}\rightharpoonup y$, since C is maximally monotone, it follows from the weak-to-strong sequentical closedness of $C$ that $ Cy=Cx^{*}$. Then $Cx_{B}^{k}\rightarrow Cy$.

We deduce from (\ref{eq3-8}) that
$\|x_{A}^{k}-x_{B}^{k}\|=\|Ty^{k}-y^{k}\|\rightarrow 0$ as $k \rightarrow +\infty$.
Then, we have
$ x_{A}^{k_{n}}\rightharpoonup y$. Since $\|y^k - z^k\| = \alpha_k \|z^k - z^{k-1}\| \rightarrow 0$ as $k\rightarrow +\infty$, we obtain $y^k \rightharpoonup z^{*}$. Therefore, we get
$\mu_{B}^{k_{n}}\rightharpoonup \frac {1}{\gamma}(z^{*}-y)$ and $\mu_{A}^{k_{n}}\rightharpoonup \frac {1}{\gamma}(y-z^{*}-\gamma Cy)$.

By Corollary 26.8 of \cite{bauschkebook2017}, we obtain
\begin{equation}\label{eq3-11}
\frac {1}{\gamma}(z^{*}-y)\in By \Rightarrow y = (I+\gamma B)^{-1}z^{*} = J_{\gamma B}z^{*},
\end{equation}
and $ y\in zer(A+B+C)$.
Therefore $y$ is the unique weak sequential cluster point of $\{x_{B}^{k}\}$. Then $\{x_{B}^{k}\}$ converges weakly to $J_{\gamma B}z^{*} \in zer(A+B+C)$.

\noindent (iii)\, The conclusion of $x_{A}^{k}\rightharpoonup J_{\gamma B}z^{*}$ comes from the fact that $\|x_{A}^{k}-x_{B}^{k}\|\rightarrow 0$
as $k \rightarrow  +\infty$ and $x_{B}^{k} \rightharpoonup J_{\gamma B}z^{*}$.

\noindent (iv)\, Let $ x^{*}=J_{\gamma B}z^{*}$, then $x^{*}\in zer(A+B+C)$. Let $\mu_{B}^{*}=\frac {1}{\gamma}(z^{*}-x^{*})\in Bx^{*}$, hence, we have $\mu_{B}^{*} \in Bx^{*}$. Let $\mu_{A}^{*}=\frac {1}{\gamma}(x^{*}-z^{*})-Cx^{*} \in Ax^{*}$, we derive from $x^{*}\in zer(A+B+C)$ and $\mu_{B}^{*} \in Bx^{*}$ that $\mu_{A}^{*} \in Ax^{*}$. It follows from that $B+C$ is monotone and $(x_{B}^{k}, \mu_{B}^{k})\in \textrm{ gra } B$, we have
$\langle x_{B}^{k}-x^{*}, \mu_{B}^{k}+ Cx_{B}^{k}-(\mu_{B}^{*}+Cx_{B}^{k})\rangle \geq 0$.

(a)\, Let $M=\{x^{*}\}\bigcup \{x_{A}^{k}| k\geq 0\}$. Then because $A$ is uniformly monotone on every nonempty bounded subset of $\textrm{ dom } A$. So there exists an increasing function $\phi_{A}: R_{+}\rightarrow [0, +\infty]$ that only vanishes at $0$ such that
\begin{align}\label{eq3-12}
\gamma \phi_{A}(\|x_{A}^{k}-x^{*}\|) &\leq \gamma \langle x_{A}^{k}-x^{*}, \mu_{A}^{k}-\mu_{A}^{*}\rangle
                     +\gamma \langle x_{B}^{k}-x^{*}, \mu_{B}^{k} +Cx_{B}^{k}-(\mu_{B}^{*}+Cx^{*})\rangle \nonumber \\
                     &=\gamma \langle x_{A}^{k}-x_{B}^{k}, \mu_{A}^{k}-\mu_{A}^{*}\rangle +\gamma \langle x_{B}^{k}-x^{*}, \mu_{A}^{k} -\mu_{A}^{*}\rangle \nonumber \\
                     \quad &+\gamma \langle x_{B}^{k}-x^{*}, \mu_{B}^{k} +Cx_{B}^{k}-(\mu_{B}^{*}+Cx^{*})\rangle \nonumber \\
                     &=\gamma \langle x_{A}^{k}-x_{B}^{k}, \mu_{A}^{k}-\mu_{A}^{*}\rangle +\gamma \langle x_{B}^{k}-x^{*}, \mu_{A}^{k} +\mu_{B}^{k}+Cx_{B}^{k}\rangle \nonumber \\
                     &=\langle x_{B}^{k}-x_{A}^{k}, x_{B}^{k}-\gamma\mu_{A}^{k}-(x^{*}-\gamma \mu_{A}^{*}) \rangle \nonumber \\
            &\leq \langle x_{B}^{k}-x_{A}^{k}, y^{k}-z^{*}\rangle+\gamma \langle x_{B}^{k}-x_{A}^{k}, Cx_{B}^{k}-Cx^{*}\rangle.
\end{align}
Since $y^{k}\rightharpoonup z^{*}$, $\|x_{B}^{k}-x_{A}^{k}\|\rightarrow 0$, and $Cx_{B}^{k}-Cx^{*}\rightarrow 0$, as $k\rightarrow +\infty$. Then, we obtain from the above inequality that $x_{A}^{k} \rightarrow x^{*}$.
Moreover, $x_{B}^{k}\rightarrow x^{*}$ because $x_{A}^{k}-x_{B}^{k}\rightarrow 0$, as $k\rightarrow +\infty$.

 (b)\, Since $A+C$ is monotone, we have
\begin{equation}\label{eq3-13}
0 \leq \langle x_{A}^{k} - x^{*}, \mu_{A}^{k} + Cx_{A}^{k} - (\mu_{A}^{*}+Cx^{*})   \rangle.
\end{equation}
Let $M=\{x^{*}\} \bigcup \{x_{B}^{k}| k\geq 0\}$. It follows from the uniformly monotone $B$ that
\begin{equation}\label{eq3-14}
\phi_{B}(\| x_{B}^{k} - x^{*} \|) \leq \langle x_{B}^{k} - x^{*}, \mu_{B}^{k} - \mu_{B}^{*} \rangle,
\end{equation}
where $\phi_{B}: R_{+}\rightarrow [0, +\infty]$ that only vanishes at $0$. Multiply $\gamma >0$ on the both side of (\ref{eq3-13}) and (\ref{eq3-14}), then we obtain
\begin{align}\label{eq3-15}
\gamma \phi_{B}(\| x_{B}^{k} - x^{*} \|) & \leq \gamma \langle x_{B}^{k} - x^{*}, \mu_{B}^{k} - \mu_{B}^{*} \rangle + \gamma \langle x_{A}^{k} - x^{*}, \mu_{A}^{k} + Cx_{A}^{k} - (\mu_{A}^{*}+Cx^{*})   \rangle \nonumber \\
& = \gamma \langle x_{B}^{k} - x_{A}^{k}, \mu_{B}^{k} - \mu_{B}^{*} \rangle + \langle x_{A}^{k} - x^{*}, \mu_{B}^{k} - \mu_{B}^{*} \rangle  \nonumber \\
& \quad + \gamma \langle x_{A}^{k} - x^{*}, \mu_{A}^{k} + Cx_{A}^{k} - (\mu_{A}^{*}+Cx^{*})   \rangle \nonumber \\
& = \gamma \langle x_{B}^{k} - x_{A}^{k}, \mu_{B}^{k} - \mu_{B}^{*} \rangle + \gamma \langle x_{A}^{k} - x^{*}, \mu_{A}^{k} + \mu_{B}^{k} + C x_{A}^{k}  \rangle \nonumber \\
& = \gamma \langle x_{B}^{k} - x_{A}^{k}, \mu_{B}^{k} - \mu_{B}^{*} \rangle + \langle x_{A}^{k} - x^{*}, x_{B}^{k} - x_{A}^{k} \rangle \nonumber \\
& \quad + \gamma \langle  x_{A}^{k} - x^{*}, Cx_{A}^{k} - Cx_{B}^{k}  \rangle.
\end{align}
Since $\lim_{k\rightarrow +\infty}\|x_{B}^{k} - x_{A}^{k}\| = 0$ and $C$ is $1/\beta$-Lipschitz continuous, then $\|Cx_{A}^{k} - Cx_{B}^{k}\| \rightarrow 0 $ as $k\rightarrow +\infty$. Hence, we derive from (\ref{eq3-15}) that $x_{B}^{k} \rightarrow x^{*}$ as $k\rightarrow +\infty$.

 (c)\, Because $Cx_{B}^{k} \rightarrow Cx^{*}$ and $x_{B}^{k} \rightharpoonup x^{*}$, thus $x_{B}^{k} \rightarrow x^{*}$ by the demiregularity of $C$. This completes the proof.
\end{proof}

\begin{theorem}\label{main-theorem2}
Let $H$ be a real Hilbert space. Let $A:H \rightarrow 2^{H}$ and $B:H\rightarrow 2^H$ be two maximally monotone operators. Let $C:H\rightarrow H$ be a $\beta$-cocoercive operator, for some $\beta> 0$. Let the operator $T$ be defined as (\ref{def-T}).
Let the iterative sequences $\{z^{k}\}, \{x_{A}^{k}\}, \{x_{B}^{k}\}$ are generated by Algorithm \ref{alg1}. Assume that
 $0 \leq \alpha_{k} \leq \alpha \leq 1 $, $ 0< \underline{\lambda} \leq \lambda_{k} < 1 $ and $\sum_{k=0}^{+\infty}\alpha_{k}\|z^{k}-z^{k-1}\|^{2}< +\infty$.
Then the following hold

\noindent \emph{(i)} $\{z^{k}\}$ converges weakly to a fixed point of $T$.

\noindent \emph{(ii)} Let $\lambda_{k}\geq \underline{\lambda}> 0 $ and $z^{*}$ be a fixed point of $T$. Then $\{x_{B}^{k}\}$ converges weakly to $ J_{\gamma B}z^{*}\in zer(A+B+C)$.

\noindent \emph{(iii)} Let $\lambda_{k}\geq \underline{\lambda}> 0 $ and $z^{*}$ be a fixed point of $T$. Then $\{x_{A}^{k}\}$ converges weakly to $ J_{\gamma B}z^{*}\in zer(A+B+C)$.

\noindent \emph{(iv)} Let $\lambda_{k}\geq \underline{\lambda}> 0 $ and $z^{*}$ be a fixed point of $T$. Suppose that one of the following conditions hold

\noindent \emph{(a)} $A$ is uniformly monotone on every nonempty bounded subset of $\textrm{ dom } A$.\\
\noindent \emph{(b)} $B$ is uniformly monotone on every nonempty bounded subset of $\textrm{ dom } B$.\\
\noindent \emph{(c)} $C$ is demiregular at every point $x \in zer(A+B+C)$. \\
Then $\{x_{A}^{k}\}$ and $\{x_{B}^{k}\}$ converge strongly to $J_{\gamma B}z^{*}\in zer(A+B+C)$.
\end{theorem}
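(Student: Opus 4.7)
The plan is to mirror the proof of Theorem \ref{main-theorem}, replacing every appeal to Lemma \ref{iKM-convergence-2} by an appeal to Lemma \ref{iKM-convergence-1}, whose hypotheses are now exactly the ones stated in Theorem \ref{main-theorem2}. First I would rewrite Algorithm \ref{alg1} in the KM form $z^{k+1}=(1-\lambda_{k})y^{k}+\lambda_{k}Ty^{k}$ as in (\ref{eq3-1}), then invoke Proposition \ref{three-operator-prop} to write $T=(1-\overline{\alpha})I+\overline{\alpha}R$ with $R$ nonexpansive and $Fix(R)=Fix(T)$. Substituting gives $z^{k+1}=(1-\lambda_{k}\overline{\alpha})y^{k}+\lambda_{k}\overline{\alpha}Ry^{k}$, which is an iKM iteration with effective relaxation $\rho_{k}:=\lambda_{k}\overline{\alpha}\in[\underline{\lambda}\,\overline{\alpha},\overline{\alpha}]\subset(0,1)$ because $\lambda_{k}<1$ and $\overline{\alpha}<1$. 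Together with $\alpha_{k}\in[0,\alpha)$ and the standing summability $\sum_{k}\alpha_{k}\|z^{k}-z^{k-1}\|^{2}<+\infty$, Lemma \ref{iKM-convergence-1} applies and yields conclusion (i): $\lim_{k}\|z^{k}-z^{*}\|$ exists for every $z^{*}\in Fix(T)$ and $z^{k}\rightharpoonup z^{*}\in Fix(T)$.

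For (ii)--(iv), I would reproduce the inequality chain (\ref{eq3-3})--(\ref{eq3-5}) based on the sharper averaged estimate of Proposition \ref{three-operator-prop}, obtaining
\begin{equation*}
\|z^{k+1}-z^{*}\|^{2}\le \|y^{k}-z^{*}\|^{2}-\lambda_{k}\!\left(\tfrac{1}{\overline{\alpha}}-\lambda_{k}\right)\!\|Ty^{k}-y^{k}\|^{2}-\lambda_{k}\gamma\!\left(2\beta-\tfrac{\gamma}{\overline{\varepsilon}}\right)\!\|CJ_{\gamma B}y^{k}-CJ_{\gamma B}z^{*}\|^{2},
\end{equation*}
and then substitute the three-term identity $\|y^{k}-z^{*}\|^{2}=(1+\alpha_{k})\|z^{k}-z^{*}\|^{2}-\alpha_{k}\|z^{k-1}-z^{*}\|^{2}+\alpha_{k}(1+\alpha_{k})\|z^{k}-z^{k-1}\|^{2}$. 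Setting $\phi_{k}:=\|z^{k}-z^{*}\|^{2}$ and $\delta_{k}:=\alpha_{k}(1+\alpha_{k})\|z^{k}-z^{k-1}\|^{2}$ (so $\sum_{k}\delta_{k}<+\infty$), telescoping the rearrangement $\phi_{k+1}-\phi_{k}-\alpha_{k}(\phi_{k}-\phi_{k-1})\le\delta_{k}-(\text{nonneg.\ terms})$ and using that $\{\phi_{k}\}$ converges (hence its positive-part increments are summable by Maingé's recursion lemma) lets me sum the two nonnegative residuals. Combined with $\lambda_{k}\ge\underline{\lambda}>0$ and $\gamma<2\beta\overline{\varepsilon}$, this produces $\|Ty^{k}-y^{k}\|\to 0$ and $\|CJ_{\gamma B}y^{k}-CJ_{\gamma B}z^{*}\|\to 0$.

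From the identity $z^{k+1}-z^{k}=\lambda_{k}(Ty^{k}-y^{k})+\alpha_{k}(z^{k}-z^{k-1})$, a standard $\limsup$ argument using $\alpha<1$ gives $\|z^{k+1}-z^{k}\|\to 0$, whence $\|y^{k}-z^{k}\|=\alpha_{k}\|z^{k}-z^{k-1}\|\to 0$ and $y^{k}\rightharpoonup z^{*}$. From this point on the proof of Theorem \ref{main-theorem} applies verbatim: the demiclosedness identification (\ref{eq3-10})--(\ref{eq3-11}) of any weak cluster point of $\{x_{B}^{k}\}$ as $J_{\gamma B}z^{*}\in\mathrm{zer}(A+B+C)$ yields (ii); (iii) follows from $\|x_{A}^{k}-x_{B}^{k}\|=\|Ty^{k}-y^{k}\|\to 0$; and (iv)(a)--(c) reuse exactly the computations (\ref{eq3-12})--(\ref{eq3-15}) together with demiregularity.

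The only nontrivial step I anticipate is extracting $\|Ty^{k}-y^{k}\|\to 0$ from the inertial descent inequality: unlike the setting of Theorem \ref{main-theorem}, Lemma \ref{iKM-convergence-1} does not directly provide $\sum\|z^{k+1}-z^{k}\|^{2}<+\infty$, so one must carry the mixed telescoping term $\alpha_{k}(\phi_{k}-\phi_{k-1})$ and control it by $\alpha\sum[\phi_{k}-\phi_{k-1}]_{+}$, which is finite because $\{\phi_{k}\}$ is monotonically controlled in the Maingé sense. Once this summability is in place, everything downstream is a routine transcription of the arguments already used in Theorem \ref{main-theorem}.
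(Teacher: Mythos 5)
Your proposal is correct and follows essentially the same route as the paper: rewrite Algorithm \ref{alg1} as the iKM iteration (\ref{eq3-1}) with nonexpansive part $R$ and effective relaxation $\lambda_k\overline{\alpha}$, invoke Maing\'{e}'s Lemma \ref{iKM-convergence-1} (the paper's citation of Lemma \ref{iKM-convergence-2} at this point is evidently a typo) to get (i), observe that the summability hypothesis gives $\alpha_k\|z^k-z^{k-1}\|\to 0$ and hence $y^k-z^k\to 0$, and then transplant the cluster-point and uniform-monotonicity arguments of Theorem \ref{main-theorem} for (ii)--(iv). The one place you diverge is in extracting $\|Ty^k-y^k\|\to 0$: the paper does this more directly by sandwiching $\|z^{k+1}-z^{*}\|\le\|y^{k}-z^{*}\|\le\|z^{k}-z^{*}\|+\alpha_{k}\|z^{k}-z^{k-1}\|$ as in (\ref{eq3-6})--(\ref{eq3-7}) to conclude $\lim_{k}\|y^{k}-z^{*}\|=\lim_{k}\|z^{k}-z^{*}\|$ and then reading the vanishing residuals off (\ref{eq3-5}), whereas your Maing\'{e}-type telescoping of $\phi_{k+1}-\phi_{k}-\alpha_{k}(\phi_{k}-\phi_{k-1})$ is a workable but unnecessary detour --- and be aware that convergence of $\{\phi_{k}\}$ alone does not make $\sum_{k}[\phi_{k}-\phi_{k-1}]_{+}$ finite (take $\phi_{k}$ oscillating like $(-1)^{k}/k$), so that summability must come from the recursion inequality itself via Maing\'{e}'s lemma, not from the parenthetical ``hence'' as you wrote it.
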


\begin{proof}
(i)\, Since the iterative sequences $\{z^k\}$ generated by Algorithm \ref{alg1} can be rewritten as (\ref{eq3-1}). Hence, by Lemma \ref{iKM-convergence-2}, we can conclude that $\{z^k\}$ converges weakly to a fixed point of $T$.

On the other other hand, we deduce from $\sum_{k=0}^{+\infty}\alpha_k \|z^k - z^{k-1}\|^2 < +\infty$ that $\lim_{k\rightarrow +\infty}\alpha_k \|z^k - z^{k-1}\|=0$. Then we have
\begin{equation}
y^k - z^k = \alpha_k (z^k - z^{k-1}) \rightarrow 0, \, \textrm{ as }\, k \rightarrow +\infty.
\end{equation}
The conclusions of (ii)-(iv) follow the same proof of Theorem \ref{main-theorem}, so we omit it here.

\end{proof}

\begin{remark}
The conditions of inertial parameters $\{\alpha_k\}$ and $\{\lambda_k\}$ are different in Theorem \ref{main-theorem} and Theorem \ref{main-theorem2}. In Theorem \ref{main-theorem}, the relaxation parameters $\{\lambda_k\}$ are restricted by the upper bound of $\{\alpha_k\}$, while the relaxation parameters $\{\lambda_k\}$ are independent to the inertial parameters in Theorem \ref{main-theorem2}.

On the other hand, the inertial parameters $\{\alpha_k\}$ in Theorem \ref{main-theorem2} are depended on the iterative sequences. In general, it is recommended to set as follows:
$$
\alpha_k = \min \{ \frac{1}{k^2 \|z^k - z^{k-1}\|^2}, \overline{\alpha} \}, \quad \textrm{ where }\, \overline{\alpha}\in (0,1).
$$
However, it is seldom to see any numerical experiments reported on this particular choice of inertial parameters. We will present numerical experiments to demonstrate the effectiveness and efficiency of it.

\end{remark}

As applications, we consider the following general convex optimization problem:
\begin{equation}\label{sum-three-convex}
\min_{x\in H}\, h(x) + f(x) + g(x),
\end{equation}
where $f:H\rightarrow (-\infty,+\infty]$ and $g:H\rightarrow (-\infty,+\infty]$ are proper, lower semi-continuous convex functions, and $h:H\rightarrow R$ is a convex differentiable function with a $1/\beta$-Lipschitz continuous gradient. Under some qualification conditions, the first-order optimality condition of (\ref{sum-three-convex}) is equivalent to the monotone inclusion problem (\ref{more two-monotone-inclusion}). Therefore,
we obtain a class of inertial three-operator splitting algorithm for solving the convex optimization problem of the sum of three convex functions (\ref{sum-three-convex}).

\begin{theorem}\label{coro1}
Let $H$ be a real Hilbert space. Let $f,g:H\rightarrow (-\infty,+\infty]$ are proper, lower semi-continuous convex functions. Let $h:H\rightarrow R$ is convex differentiable with a $1/\beta$-Lipschitz continuous gradient. Suppose that $zer(\partial f+\partial g+\nabla h)\neq \varnothing$.
Let $z^0, z^{-1}\in H$, and set
\begin{equation}\label{inertial-cp-algorithm}
\left\{
\begin{aligned}
y^{k}&=z^{k}+\alpha_{k}(z^{k}-z^{k-1});\\
x_{g}^{k}&= prox_{\gamma g}y^{k};\\
x_{f}^{k}&= prox_{\gamma f}(2x_{g}^{k}-y^{k}-\gamma \nabla h(x_{g}^{k}));\\
z^{k+1}&=y^{k}+\lambda _{k}(x_{f}^{k}-x_{g}^{k})x. \\
\end{aligned}
\right.
\end{equation}
Assume that the parameters $\alpha_{k}, \gamma $ and $\lambda_{k}$ satisfy the conditions of Theorem \ref{main-theorem}.
Then the following hold

\noindent \emph{(i)} $\{z^{k}\}$ converges weakly to a fixed point of $T$, where $T := I-prox_{\gamma g}+prox_{\gamma f}(2prox_{\gamma g}-I-\gamma \nabla h( prox_{\gamma g}))$.

\noindent \emph{(ii)} Let $\lambda_{k}\geq \underline{\lambda}> 0 $ and $z^{*}$ be a fixed point of $T$. Then $\{x_{g}^{k}\}$ converges weakly to $ prox_{\gamma g}z^{*}\in zer(\partial f+\partial g+\nabla h)$.

\noindent \emph{(iii)} Let $\lambda_{k}\geq \underline{\lambda}> 0 $ and $z^{*}$ be a fixed point of $T$. Then $\{x_{f}^{k}\}$ converges weakly to $ prox_{\gamma g}z^{*}\in zer(\partial f+\partial g+\nabla h)$.

\noindent \emph{(iv)} Let $\lambda_{k}\geq \underline{\lambda}> 0 $ and $z^{*}$ be a fixed point of $T$. Suppose that one of the followiing conditions hold

 \noindent \emph{(a)} $f$ is uniformly convex on every nonempty bounded subset of $\textrm{ dom } \partial f$.\\
 \noindent \emph{(b)} $g$ is uniformly convex on every nonempty bounded subset of $\textrm{ dom } \partial g$.\\
 \noindent \emph{(c)} $\partial h$ is demiregular at every point $x \in zer(\partial f+\partial g+\nabla h)$.

Then $x_{f}^{k}$ and $\{x_{g}^{k}\}$ converge strongly to $ prox_{\gamma g}z^{*}\in zer(\partial f+\partial g+\nabla h)$.

\end{theorem}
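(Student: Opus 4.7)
The plan is to deduce Theorem \ref{coro1} as an immediate specialization of Theorem \ref{main-theorem} to the subdifferential/gradient setting. Setting $A := \partial f$, $B := \partial g$, and $C := \nabla h$, and using the identifications $J_{\gamma \partial f} = prox_{\gamma f}$ and $J_{\gamma \partial g} = prox_{\gamma g}$, the iteration (\ref{inertial-cp-algorithm}) coincides exactly with Algorithm \ref{alg1}. So the whole proof reduces to verifying that the hypotheses of Theorem \ref{main-theorem} are satisfied in this convex-analytic setting and then transferring the conclusions back.

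First I would invoke three classical facts to check the hypotheses. Since $f, g \in \Gamma_0(H)$, the Moreau-Rockafellar theorem gives that $\partial f$ and $\partial g$ are maximally monotone on $H$. Since $h:H\rightarrow \mathbb{R}$ is convex and Fr\'echet differentiable with $1/\beta$-Lipschitz continuous gradient, the Baillon-Haddad theorem gives that $\nabla h$ is $\beta$-cocoercive. The hypothesis $zer(\partial f+\partial g+\nabla h)\neq \varnothing$ together with Lemma \ref{three-operator-lemma} then yields $Fix(T)\neq \varnothing$, so the operator $T$ defined in the statement of the corollary is precisely the operator $T$ of Theorem \ref{main-theorem} associated with this triple. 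The parameter conditions on $\gamma, \{\alpha_k\}, \{\lambda_k\}$ are inherited by assumption. Consequently, parts (i), (ii), (iii) follow at once from parts (i), (ii), (iii) of Theorem \ref{main-theorem}, after replacing $J_{\gamma B}$ by $prox_{\gamma g}$ in the limit identifications.

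For part (iv), the only remaining task is to translate the three convex-analytic conditions into the monotone-operator conditions of Theorem \ref{main-theorem}(iv). For (a) and (b), uniform convexity of $f$ (respectively $g$) on a nonempty bounded subset of $\textrm{ dom } \partial f$ (respectively $\textrm{ dom } \partial g$) is equivalent to uniform monotonicity of $\partial f$ (respectively $\partial g$) on that same subset; this is a standard duality recorded in Z\u{a}linescu's monograph already cited in the preliminaries. Condition (c) is stated directly on $\nabla h$ and needs no translation. With these identifications the corresponding conclusion of Theorem \ref{main-theorem}(iv) yields strong convergence of $\{x_f^k\}$ and $\{x_g^k\}$ to $prox_{\gamma g}z^*$. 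The main ``obstacle'' here is essentially bookkeeping rather than analysis: the only subtlety is remembering the uniform-convexity $\Leftrightarrow$ uniform-monotonicity correspondence for subdifferentials of functions in $\Gamma_0(H)$.
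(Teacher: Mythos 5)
Your proposal is correct and follows essentially the same route as the paper: identify $A=\partial f$, $B=\partial g$, $C=\nabla h$, invoke maximal monotonicity of subdifferentials and the Baillon--Haddad theorem for $\beta$-cocoercivity of $\nabla h$, and then apply Theorem \ref{main-theorem} directly. The only difference is that you spell out the uniform-convexity/uniform-monotonicity correspondence needed for part (iv), which the paper leaves implicit.
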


\begin{proof}
Let $A=\partial f, B=\partial g, C=\nabla h$. Since the subdifferential of a proper convex and lower semicontinuous function is a maximally monotone operator, then $\partial f$ and $\partial g$ are maximally monotone operators. On the other hand, by the Baillon-Haddad theorem, $\nabla h$ is $\beta$ cocoercive. Therefore, we can get the conclusions of Theorem \ref{coro1} from Theorem \ref{main-theorem} immediately.
\end{proof}

Similarly, we obtain the following convergence theorem of the iterative sequence (\ref{inertial-cp-algorithm}) from Theorem \ref{main-theorem2}.

\begin{theorem}\label{coro2}

Let $H$ be a real Hilbert space. Let $f$ and $g:H\rightarrow (-\infty,+\infty]$ are proper closed lower semi-continuous convex function. Let $h:H\rightarrow R$ is convex differentiable function with a $1/\beta$-Lipschitz continuous gradient. Suppose that $zer(\partial f+\partial g+\nabla h)\neq \varnothing$.
Let the iterative sequences $\{z^{k}\}, \{x_{g}^{k}\}$, and $\{x_{f}^{k}\}$ are generated by (\ref{inertial-cp-algorithm}).
Assume that the parameters $\alpha_{k}, \gamma $ and $\lambda_{k}$ satisfy the conditions of Theorem \ref{main-theorem2}.
Then the following hold:

\noindent \emph{(i)} $\{z^{k}\}$ converges weakly to a fixed point of $T$, where $T := I-prox_{\gamma g}+prox_{\gamma f}(2prox_{\gamma g}-I-\gamma \nabla h( prox_{\gamma g}))$.

\noindent \emph{(ii)} Let $\lambda_{k}\geq \underline{\lambda}> 0 $ and $z^{*}$ be a fixed point of $T$. Then $\{x_{g}^{k}\}$ converges weakly to $ prox_{\gamma g}z^{*}\in zer(\partial f+\partial g+\nabla h)$.

\noindent \emph{(iii)} Let $\lambda_{k}\geq \underline{\lambda}> 0 $ and $z^{*}$ be a fixed point of $T$. Then $\{x_{f}^{k}\}$ converges weakly to $ prox_{\gamma g}z^{*}\in zer(\partial f+\partial g+\nabla h)$.

\noindent \emph{(iv)} Let $\lambda_{k}\geq \underline{\lambda}> 0 $ and $z^{*}$ be a fixed point of $T$. Suppose that one of the following conditions hold

 \noindent \emph{(a)} $f$ is uniformly convex on every nonempty bounded subset of $\textrm{ dom } \partial f$.\\
 \noindent \emph{(b)} $g$ is uniformly convex on every nonempty bounded subset of $ \textrm{ dom } \partial g$.\\
 \noindent \emph{(c)} $\partial h$ is demiregular at every point $x \in zer(\partial f+\partial g+\nabla h)$.

Then $\{x_{f}^{k}\}$ and $\{x_{g}^{k}\}$ converge strongly to $ prox_{\gamma g}z^{*}\in zer(\partial f+\partial g+\nabla h)$.

\end{theorem}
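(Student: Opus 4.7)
The plan is to derive Theorem \ref{coro2} as an immediate corollary of Theorem \ref{main-theorem2} by specializing to the subdifferential setting, exactly parallel to how Theorem \ref{coro1} was obtained from Theorem \ref{main-theorem}. First I would set $A := \partial f$, $B := \partial g$, and $C := \nabla h$, so that the first-order optimality condition for (\ref{sum-three-convex}) becomes the monotone inclusion $0 \in Ax + Bx + Cx$ of (\ref{more two-monotone-inclusion}), and so that $zer(\partial f + \partial g + \nabla h) \neq \emptyset$ matches the nonemptiness hypothesis used in Theorem \ref{main-theorem2}.

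Next I would verify that the hypotheses of Theorem \ref{main-theorem2} hold under this identification. Since $f, g \in \Gamma_0(H)$, the classical Moreau--Rockafellar theorem guarantees that $\partial f$ and $\partial g$ are maximally monotone. For the cocoercive operator, I would invoke the Baillon--Haddad theorem: because $h$ is convex and $\nabla h$ is $1/\beta$-Lipschitz, $\nabla h$ is automatically $\beta$-cocoercive. Since $J_{\gamma \partial f} = prox_{\gamma f}$ and $J_{\gamma \partial g} = prox_{\gamma g}$, Algorithm \ref{alg1} collapses exactly to iteration (\ref{inertial-cp-algorithm}), and the operator $T$ defined in (\ref{def-T}) coincides with the $T$ in the statement of Theorem \ref{coro2}. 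The parameter hypotheses on $\{\alpha_k\}$, $\gamma$, $\{\lambda_k\}$ are imposed verbatim, so parts (i)--(iii) transfer instantly from Theorem \ref{main-theorem2}(i)--(iii).

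For part (iv), the translation proceeds condition-by-condition: uniform convexity of $f$ on bounded subsets of $\textrm{dom}\,\partial f$ implies uniform monotonicity of $\partial f$ on the same subsets, by a standard identity in convex analysis (see \cite{Zalinescu2012}); the same holds for $g$ and $\partial g$. Condition (c) on $\nabla h$ in the statement is literally the demiregularity hypothesis of Theorem \ref{main-theorem2}(iv)(c). Hence each of (a), (b), (c) in Theorem \ref{coro2}(iv) implies the corresponding alternative in Theorem \ref{main-theorem2}(iv), and the strong convergence of $\{x_f^k\}$ and $\{x_g^k\}$ to $prox_{\gamma g} z^*$ follows.

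There is essentially no obstacle in this argument; the entire proof is a dictionary translation between the monotone-operator framework of Theorem \ref{main-theorem2} and the convex-analytic framework of (\ref{sum-three-convex}). The only point that deserves a one-line justification is the equivalence between uniform convexity of a function and uniform monotonicity of its subdifferential, and this is a well-established fact. Consequently, the proof can be recorded in just a few lines, simply invoking Theorem \ref{main-theorem2} after the identifications $A = \partial f$, $B = \partial g$, $C = \nabla h$ have been made.
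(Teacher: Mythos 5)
Your proposal is correct and follows essentially the same route as the paper: the paper likewise obtains Theorem \ref{coro2} by setting $A=\partial f$, $B=\partial g$, $C=\nabla h$, invoking maximal monotonicity of the subdifferentials and the Baillon--Haddad theorem for cocoercivity of $\nabla h$, and then citing Theorem \ref{main-theorem2} directly (the paper in fact omits the explicit proof, stating only that it is obtained ``similarly'' to Theorem \ref{coro1}). Your extra remark on translating uniform convexity into uniform monotonicity of the subdifferential for part (iv) is a welcome clarification that the paper leaves implicit.
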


%%%%%%%%%%%%%%%%%%%%%%%%%%%%%%%%%%%%%%%%%%%%%%%%%%%%%%%%%%%%%%%%%%%%%%%%%%%%%%%%%%%%%%%%%%%%%%%%%%%%%%%%%%%%%%%%%%%%%%%%%%%%%%%%%%
\section{Numerical experiments}
\vskip 3mm

In this section, we apply the proposed inertial three-operator splitting algorithm (Algorithm \ref{alg1}) to solve a constrained image inpainting problem. We also compare it with the original three-operator splitting (TOS) algorithm. Notice that the convergence of the proposed inertial three-operator splitting algorithm is obtained in Theorem \ref{main-theorem} and Theorem \ref{main-theorem2}. The difference between Theorem \ref{main-theorem} and Theorem \ref{main-theorem2} is the requirement of the inertial parameters $\{\alpha_{k}\}$ and the relaxation parameters $\{\lambda_{k}\}$. Therefore, we refer to the proposed algorithm by iTOS-1 and iTOS-2, respectively.
All the experiments are conducted in a laptop of Lenovo Intel (R) core (TM)i7-4712MQ CPU and 4 GB memory. We run the codes in MATLAB 2014A.

\subsection{Constrained image inpainting problem}

Let $u\in R^{m\times n}$ be a given image, where $\{u_{ij}\}_{(i,j)\in \Omega}$ are observed and the rest are missed. We consider the following contrained image inpainting problem:

\begin{equation}\label{constrained-image-inpainting}
\min_{x\in C}\, \frac{1}{2}\|P_{\Omega}(u)-P_{\Omega}(x)\|_{F}^{2}+ \mu \|x\|_{*},
\end{equation}
where $\|\cdot\|_{F}$ is the Frobenius norm,  $\|\cdot\|_{*}$ is the nuclear norm, $C$ is a nonempty closed convex set, and $\mu >0$ is the regularization parameter. Here, $P_{\Omega}$ is defined by

\[P_{\Omega}(u)=\left\{\begin{array}{ll}
u_{ij}, &\text{$(i,j)\in \Omega$},\\
0, &
\textrm{otherwise}.
\end{array}\right.\]

The nuclear norm has been widely used in image inpainting and matrix completion problem, which is a convex relaxation of low rank constraint. See, for example \cite{yangjfandyuan2013,tang2016-1,tangyc20171}. Here, we introduce a nonempty closed convex $C$ in (\ref{constrained-image-inpainting}), which provides an easy way to incorporate priori information. In particular, we choose $C$ as a nonnegative set, that is $C=\{x\in R^{m*n}| x_{ij}\geq 0\}$. By virtue of the indicator function $\delta_{C}(x)$, the constrained image inpainting problem (\ref{constrained-image-inpainting}) is equivalent to the following unconstrained image inpainting problem,
\begin{equation}\label{unconstrained-image-inpainting}
\min_{x}\, \frac{1}{2}\|P_{\Omega}(u)-P_{\Omega}(x)\|_{F}^{2}+\lambda\|x\|_{*}+\delta_{C}(x).
\end{equation}
It is obvious that the optimization problem (\ref{unconstrained-image-inpainting}) is a special case of the optimization problem of the sum of three convex functions (\ref{sum-three-convex}). In fact, let $h(x)=\frac{1}{2}\|P_{\Omega}(u)-P_{\Omega}(x)\|_{F}^{2}$,
$g(x)=\lambda\|x\|_{*}$, and $f(x)=\delta_{C}(x)$. Then $h(x)$ is convex differentiable and $\nabla h(x)=P_{\Omega}(u)-P_{\Omega}(x)$ with $1$-Lipschitz continuous. The proximity operator of $g(x)$ can be computed by the singular value decomposition (SVD). See, for example \cite{Caijf2010SIAM}. And the proximity operator of $f(x)$ is the orthogonal projection onto the closed convex set $C$. Therefore, the three-operator splitting algorithm and the inertial three-operator splitting algorithm can be employed to solve the optimization problem (\ref{unconstrained-image-inpainting}).

\subsection{Evaluation and parameters setting}

To evaluate the quality of the restored images, we use the signal-to-noise ration (SNR) and the structural similarity
index (SSIM) \cite{wangzhou}, which are defined by
\begin{equation}
SNR = 20log\frac{\|x\|_{F}}{\|x-x_{r}\|_{F}},
\end{equation}
and
\begin{equation}
SSIM=\frac{(2u_{x}u_{x_r}+c_{1})(2\sigma_{xx_r}+c_{2})}{(u_{x}^{2}+u_{x_r}^{2}+c_{1})(\sigma_{x}^{2}+\sigma_{x_r}^{2}+c_{2})},
\end{equation}
where $x$ is the original image, $x_{r}$ is the restored image, $u_{x}$ and $u_{x_r}$ are the mean values of the original image $x$ and restored image $x_{r}$, respectively, $\sigma_{x}^{2}$ and $\sigma_{x_r}^{2}$ are the variances, $\sigma_{xx_r}^{2}$ is the covariance of two images, $c_{1}=(K_{1}L)^{2}$ and $c_{2}=(K_{2}L)^{2}$ with $K_{1}=0.01$ and $K_{2}=0.03$, and $L$ is the dynamic range of pixel values. $SSIM$ ranges from $0$ to $1$, and $1$ means perfect recovery.

We define the relative change between two successive iterative sequences as the stopping criterion, that is
\begin{equation}
\frac{\|z^{k+1}-z^{k}\|_{F}}{\|z^{k}\|_{F}}\leq \varepsilon,
\end{equation}
where $\varepsilon$ is a given small constant.

The selection of relaxation parameter $\lambda_k$ and step size $\gamma$ are crucial to the convergence speed of the three-operator splitting algorithm and the proposed inertial three-operator splitting algorithm. For the sake of fair comparison, we define these parameters in Table \ref{parameters-selection}.

\begin{table}[htbp]
\footnotesize
\centering
\caption{Parameters selection of the studied iterative algorithms.}
\vskip 2mm
\begin{tabular}{c|c|c|c|c}
\hline
Type & Methods  & Step size &  Relaxation parameter & Inertial parameter   \\
\hline
 \hline
\multirow{3}[1]{*}{Case 1}  & TOS  & \multirow{3}[1]{*}{$\gamma = 1.8$} & $\lambda_k =1$ & None  \\
& iTOS-1 & & $\lambda_k = 0.8$ & $\alpha_k = 0.2$  \\
& iTOS-2 & & $\lambda_k =1$ & $\alpha_k = \min \{ \frac{1}{k^2 \| z^k - z^{k-1} \|^2}, 0.2 \}$  \\
\hline
\multirow{3}[1]{*}{Case 2}  & TOS  & \multirow{3}[1]{*}{$\gamma = 1$} & $\lambda_k = 0.3$ & None  \\
& iTOS-1 & & $\lambda_k = 0.3$ & $\alpha_k = 0.5$  \\
& iTOS-2 & & $\lambda_k = 0.3$ & $\alpha_k = \min \{ \frac{1}{k^2 \| z^k - z^{k-1} \|^2}, 0.5 \}$  \\
\hline
\multirow{3}[1]{*}{Case 3}  & TOS  & \multirow{3}[1]{*}{$\gamma = 0.5$} & $\lambda_k = 1.75$ & None  \\
& iTOS-1 & & $\lambda_k = 1.4$ & $\alpha_k = 0.1$  \\
& iTOS-2 & & $\lambda_k = 1.75$ & $\alpha_k = \min \{ \frac{1}{k^2 \| z^k - z^{k-1} \|^2}, 0.1 \}$  \\
\hline
\end{tabular}\label{parameters-selection}
\end{table}

\subsection{Results and discussions}

The tested image is chosen from \cite{davis2015old}, which is a gray image of a building. See Figure \ref{tested-images}. We randomly remove some pixels from the original image with missing rate $40 \%$, $60 \%$ and $80 \%$, respectively. In each case, a random Gaussian noise with mean zero and standard variance $0.01$ and $0.05$ is added. The regularization parameter $\mu$ is tuned under different noise level. We set $\mu = 0.5$ for noise level $0.01$ and $\mu = 1.8$ for noise level $0.05$, respectively.

\begin{figure}[htbp]
     \setlength{\abovecaptionskip}{-5pt}
  \centering \scalebox{0.5}{\includegraphics{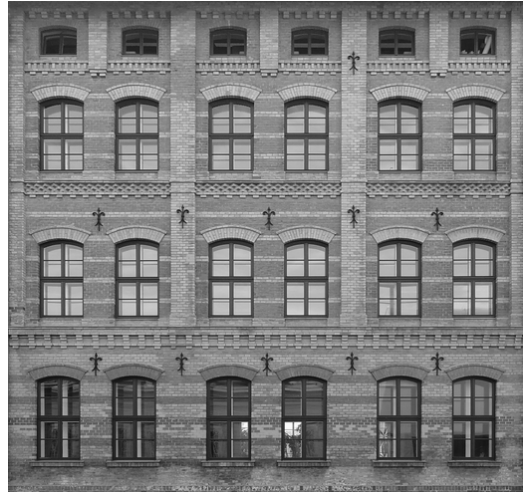}}
  \caption[]{Test image.}%
{\label{tested-images}}
\end{figure}

We test the performance of the studied iterative algorithms including TOS, iTOS-1 and iTOS-2 with parameters selection case $1$, $2$ and $3$ in Table \ref{parameters-selection}. The obtained results are presented in Table \ref{results1}, Table \ref{results2} and Table \ref{results3}, respectively.
We observe from Table \ref{results1} that when the stopping criterion $\varepsilon = 10^{-3}$, the TOS, iTOS-1 and iTOS-2 perform nearly the same in terms of the SSIM and the number of iteratons. The SNR of iTOS-1 and iTOS-2 are slightly higher than the TOS. From Table \ref{results2}, we can see that the iTOS-1 spends less iteration numbers than the TOS and iTOS-2. Further, we can see from Table \ref{results3} that the performance of the TOS and iTOS-2 are almost the same. In Table \ref{results3}, the iTOS-1 is the slowest.

We make the conclusion that from Table \ref{results1} to Table \ref{results3}, the iTOS-1 is faster than the original TOS for the same selection of the relaxation parameters, which are smaller than one. The iTOS-1 performs more stable than the iTOS-2. For visual display of the recovered images, we present them in Figure \ref{restored1}, Figure \ref{restored2} and Figure \ref{restored3}.

\begin{table}[htbp]
\footnotesize
\centering
\caption{Comparison results of parameters selection case $1$ in terms of SNR, SSIM and the number of iterations $k$.}
\vskip 2mm
\begin{tabular}{c|c|c|cccccccc}
\hline
\multirow{2}[1]{*}{Missing} & \multirow{2}[1]{*}{Noise}  & \multirow{2}[1]{*}{Methods} &   \multicolumn{3}{c}{$ \epsilon = 10^{-3}$} &  & \multicolumn{3}{c}{$ \epsilon = 10^{-5}$}
\\ \cline{4-6} \cline{8-10}
rate & level & &  $SNR$ & $SSIM$ & $k$  & & $SNR$ & $SSIM$ & $k$   \\
\hline
 \hline
\multirow{6}[1]{*}{$40 \%$}  & \multirow{3}[1]{*}{$0.01$} &  TOS  & $23.4856$ & $0.8877$ & $30$ & & $23.5961$ & $0.8895$ & $51$ \\
& & ITOS-1 & $23.4863$ & $0.8878$ & $30$ & & $23.5962$ & $0.8895$ & $51$ \\
& & ITOS-2 & $23.4868$ & $0.8878$ &  $30$ & & $23.5971$ & $0.8896$ & $980$ \\
\cline{2-10}
& \multirow{3}[1]{*}{$0.05$} &  TOS & $19.7318$ & $0.7639$ & $15$ & & $19.7395$ & $0.7642$ & $23$ \\
& & ITOS-1 & $19.7247$ & $0.7636$ & $13$ & & $19.7394$ & $0.7642$ & $20$ \\
& & ITOS-2 & $19.7326$ & $0.7639$ &  $15$ & & $19.7409$ & $0.7642$ & $39$ \\
\hline
\multirow{6}[1]{*}{$60 \%$}  & \multirow{3}[1]{*}{$0.01$} &  TOS & $20.6602$ & $0.8069$ & $49$ & & $20.8415$ & $0.8123$ & $85$ \\
& & ITOS-1 & $20.6622$ & $0.8070$ & $49$ & & $20.8415$ & $0.8123$ & $85$ \\
& & ITOS-2 & $20.6609$ & $0.8070$ &  $49$ & & $20.8420$ & $0.8123$ & $82$ \\
\cline{2-10}
& \multirow{3}[1]{*}{$0.05$} &  TOS & $17.5956$ & $0.6628$ & $21$ & & $17.6530$ & $0.6657$ & $36$ \\
& & ITOS-1 & $17.6035$ & $0.6632$ & $21$ & & $17.6529$ & $0.6657$ & $35$ \\
& & ITOS-2 & $17.5972$ & $0.6629$ &  $21$ & & $17.6530$ & $0.6657$ & $33$ \\
\hline
\multirow{6}[1]{*}{$80 \%$}  & \multirow{3}[1]{*}{$0.01$} &  TOS & $16.9807$ & $0.6433$ & $97$ & & $17.3557$ & $0.6617$ & $181$ \\
& & ITOS-1 & $16.9810$ & $0.6433$ & $97$ & & $17.3558$ & $0.6617$ & $181$ \\
& & ITOS-2 & $16.9813$ & $0.6434$ &  $97$ & & $17.3565$ & $0.6618$ & $176$ \\
\cline{2-10}
& \multirow{3}[1]{*}{$0.05$} &  TOS & $12.9316$ & $0.3830$ & $44$ & & $13.2010$ & $0.3986$ & $104$ \\
& & ITOS-1 & $12.9417$ & $0.3835$ & $44$ & & $13.2012$ & $0.3986$ & $104$ \\
& & ITOS-2 & $12.9332$ & $0.3830$ &  $44$ & & $13.2016$ & $0.3986$ & $98$ \\
\hline

\end{tabular}\label{results1}
\end{table}

\begin{table}[htbp]
\footnotesize
\centering
\caption{Comparison results of parameters selection case $2$ in terms of SNR, SSIM and the number of iterations $k$.}
\vskip 2mm
\begin{tabular}{c|c|c|cccccccc}
\hline
\multirow{2}[1]{*}{Missing} & \multirow{2}[1]{*}{Noise}  & \multirow{2}[1]{*}{Methods} &   \multicolumn{3}{c}{$ \epsilon = 10^{-3}$} &  & \multicolumn{3}{c}{$ \epsilon = 10^{-5}$}
\\ \cline{4-6} \cline{8-10}
rate & level & &  $SNR$ & $SSIM$ & $k$  & & $SNR$ & $SSIM$ & $k$   \\
\hline
 \hline
\multirow{6}[1]{*}{$40 \%$}  & \multirow{3}[1]{*}{$0.01$} &  TOS & $22.7328$ & $0.8733$ & $133$ & & $23.5898$ & $0.8894$ & $249$ \\
& & ITOS-1 & $23.2498$ & $0.8835$ & $76$ & & $23.5940$ & $0.8895$ & $134$ \\
& & ITOS-2 & $22.7345$ & $0.8734$ &  $133$ & & $23.5939$ & $0.8895$ & $244$ \\
\cline{2-10}
& \multirow{3}[1]{*}{$0.05$} &  TOS  & $19.5628$ & $0.7582$ & $55$ & & $19.7378$ & $0.7641$ & $103$ \\
& & ITOS-1 & $19.6817$ & $0.7621$ & $32$ & & $19.7389$ & $0.7641$ & $52$ \\
& & ITOS-2 & $19.5635$ & $0.7583$ &  $55$ & & $19.7389$ & $0.7641$ & $95$ \\
\hline
\multirow{6}[1]{*}{$60 \%$}  & \multirow{3}[1]{*}{$0.01$} &  TOS & $19.5084$ & $0.7689$ & $216$ & & $20.8319$ & $0.8120$ & $416$ \\
& & ITOS-1 & $20.2781$ & $0.7949$ & $124$ & & $20.8379$ & $0.8122$ & $224$ \\
& & ITOS-2 & $19.5102$ & $0.7689$ &  $216$ & & $20.8380$ & $0.8122$ & $421$ \\
\cline{2-10}
& \multirow{3}[1]{*}{$0.05$} &  TOS & $17.2028$ & $0.6438$ & $94$ & & $17.6490$ & $0.6655$ & $187$ \\
& & ITOS-1 & $17.4596$ & $0.6561$ & $55$ & & $17.6516$ & $0.6657$ & $99$ \\
& & ITOS-2 & $17.2036$ & $0.6439$ &  $94$ & & $17.6515$ & $0.6657$ & $179$ \\
\hline
\multirow{6}[1]{*}{$80 \%$}  & \multirow{3}[1]{*}{$0.01$} &  TOS & $14.3301$ & $0.5126$ & $375$ & & $17.3369$ & $0.6608$ & $877$ \\
& & ITOS-1 & $16.0750$ & $0.5979$ & $232$ & & $17.3485$ & $0.6614$ & $477$ \\
& & ITOS-2 & $14.3323$ & $0.5127$ &  $375$ & & $17.3415$ & $0.6611$ & $887$ \\
\cline{2-10}
& \multirow{3}[1]{*}{$0.05$} &  TOS & $11.6758$ & $0.3200$ & $184$ & & $13.1859$ & $0.3977$ & $547$ \\
& & ITOS-1 & $12.3940$ & $0.3544$ & $119$ & & $13.1953$ & $0.3982$ & $300$ \\
& & ITOS-2 & $11.6767$ & $0.3200$ &  $184$ & & $13.1953$ & $0.3982$ & $563$ \\
\hline

\end{tabular}\label{results2}
\end{table}

\begin{table}[htbp]
\footnotesize
\centering
\caption{Comparison results of parameters selection case $3$ in terms of SNR, SSIM and the number of iterations $k$.}
\vskip 2mm
\begin{tabular}{c|c|c|cccccccc}
\hline
\multirow{2}[1]{*}{Missing} & \multirow{2}[1]{*}{Noise}  & \multirow{2}[1]{*}{Methods} &   \multicolumn{3}{c}{$ \epsilon = 10^{-3}$} &  & \multicolumn{3}{c}{$ \epsilon = 10^{-5}$}
\\ \cline{4-6} \cline{8-10}
rate & level & &  $SNR$ & $SSIM$ & $k$  & & $SNR$ & $SSIM$ & $k$   \\
\hline
 \hline
\multirow{6}[1]{*}{$40 \%$}  & \multirow{3}[1]{*}{$0.01$} &  TOS & $23.3412$ & $0.8852$ & $55$ & & $23.5948$ & $0.8895$ & $95$ \\
& & ITOS-1 & $23.2780$ & $0.8841$ & $60$ & & $23.5946$ & $0.8895$ & $106$ \\
& & ITOS-2 & $23.3426$ & $0.8852$ &  $55$ & & $23.5951$ & $0.8895$ & $93$ \\
\cline{2-10}
& \multirow{3}[1]{*}{$0.05$} &  TOS & $19.6750$ & $0.7619$ & $23$ & & $19.7389$ & $0.7641$ & $38$ \\
& & ITOS-1 & $19.6579$ & $0.7613$ & $25$ & & $19.7389$ & $0.7641$ & $43$ \\
& & ITOS-2 & $19.6763$ & $0.7619$ &  $23$ & & $19.7395$ & $0.7642$ & $51$ \\
\hline
\multirow{6}[1]{*}{$60 \%$}  & \multirow{3}[1]{*}{$0.01$} &  TOS & $20.4405$ & $0.8001$ & $90$ & & $20.8395$ & $0.8122$ & $160$ \\
& & ITOS-1 & $20.3753$ & $0.7980$ & $99$ & & $20.8391$ & $0.8122$ & $178$ \\
& & ITOS-2 & $20.4415$ & $0.8001$ &  $90$ & & $20.8399$ & $0.8122$ & $157$ \\
\cline{2-10}
& \multirow{3}[1]{*}{$0.05$} &  TOS & $17.4874$ & $0.6575$ & $40$ & & $17.6520$ & $0.6657$ & $71$ \\
& & ITOS-1 & $17.4895$ & $0.6576$ & $45$ & & $17.6518$ & $0.6657$ & $79$ \\
& & ITOS-2 & $17.4884$ & $0.6575$ &  $40$ & & $17.6522$ & $0.6657$ & $69$ \\
\hline
\multirow{6}[1]{*}{$80 \%$}  & \multirow{3}[1]{*}{$0.01$} &  TOS & $16.5053$ & $0.6195$ & $173$ & & $17.3517$ & $0.6615$ & $341$ \\
& & ITOS-1 & $16.3921$ & $0.6138$ & $190$ & & $17.3508$ & $0.6615$ & $379$ \\
& & ITOS-2 & $16.5064$ & $0.6195$ &  $173$ & & $17.3525$ & $0.6616$ & $337$ \\
\cline{2-10}
& \multirow{3}[1]{*}{$0.05$} &  TOS & $12.6118$ & $0.3657$ & $92$ & & $13.1979$ & $0.3984$ & $217$ \\
& & ITOS-1 & $12.5646$ & $0.3632$ & $101$ & & $13.1972$ & $0.3983$ & $241$ \\
& & ITOS-2 & $12.6128$ & $0.3658$ &  $92$ & & $13.1985$ & $0.3984$ & $212$ \\
\hline

\end{tabular}\label{results3}
\end{table}

\begin{figure}[H]
    \centering
    \subfigure[Missing and noise image]{
        \scalebox{0.4}{\includegraphics{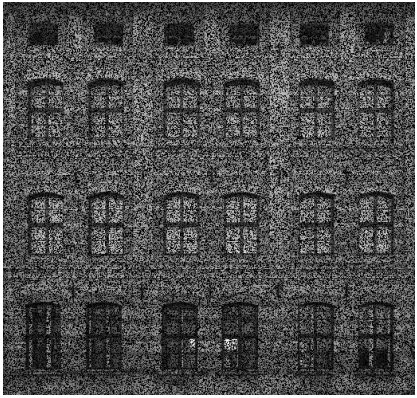}}
    }
    \subfigure[TOS]{
        \scalebox{0.4}{\includegraphics{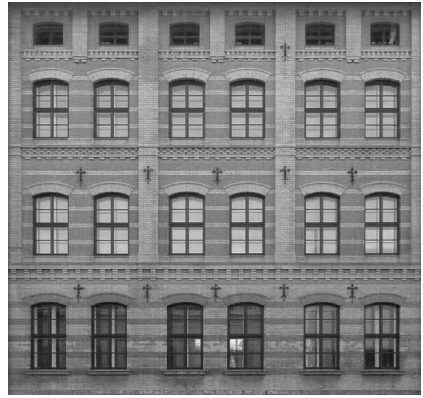}}
    }
    \\
    \subfigure[iTOS-1]{
        \scalebox{0.4}{\includegraphics{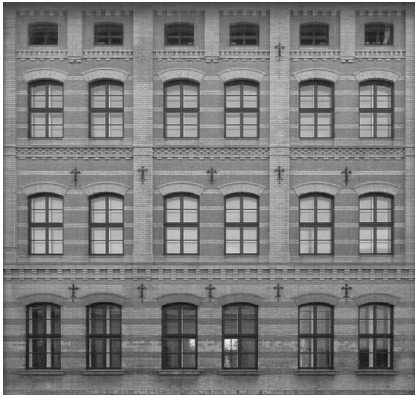}}
    }
    \subfigure[iTOS-2]{
        \scalebox{0.4}{\includegraphics{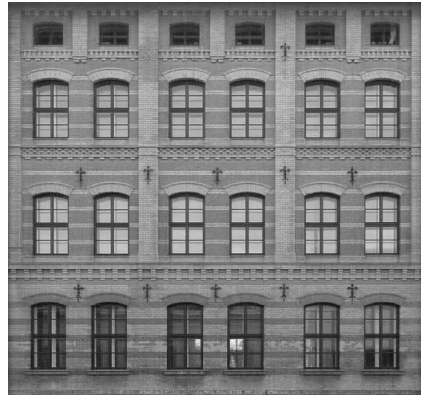}}
    }
    \caption{The missing and restored images. (a)\, $40 \%$ missing and $0.01$ noise image (SNR: 3.9784(dB), SSIM: 0.1384); (b)\, TOS (SNR: 23.5961(dB), SSIM: 0.8895); (c)\, iTOS-1 (SNR: 23.5962(dB), SSIM: 0.8895); (d)\, iTOS-2 (SNR: 23.5971(dB), SSIM: 0.8896).    }
    \label{restored1}
\end{figure}

\begin{figure}[H]
    \centering
    \subfigure[Missing and noise image]{
        \scalebox{0.4}{\includegraphics{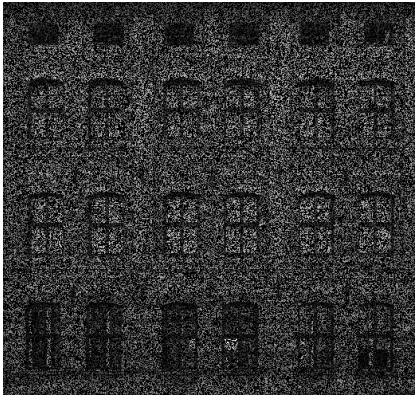}}
    }
    \subfigure[TOS]{
        \scalebox{0.4}{\includegraphics{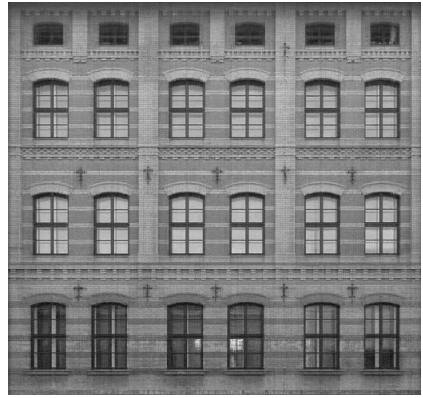}}
    }
    \\
    \subfigure[iTOS-1]{
        \scalebox{0.4}{\includegraphics{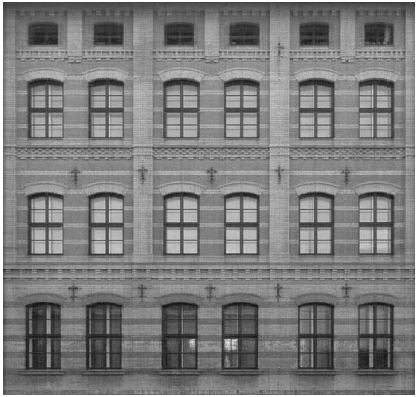}}
    }
    \subfigure[iTOS-2]{
        \scalebox{0.4}{\includegraphics{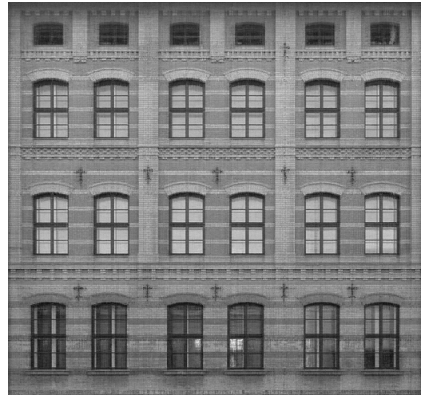}}
    }
    \caption{The missing and restored images. (a)\, $60 \%$ missing and $0.01$ noise image (SNR: 2.2314(dB), SSIM: 0.0788); (b)\, TOS (SNR: 20.8415(dB), SSIM: 0.8123); (c)\, iTOS-1 (SNR: 20.8415(dB), SSIM: 0.8123); (d)\, iTOS-2 (SNR: 20.8420(dB), SSIM: 0.8123).    }
    \label{restored2}
\end{figure}

\begin{figure}[H]
    \centering
    \subfigure[Missing and noise image]{
        \scalebox{0.4}{\includegraphics{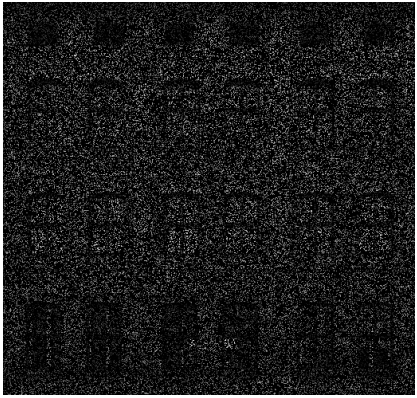}}
    }
    \subfigure[TOS]{
        \scalebox{0.4}{\includegraphics{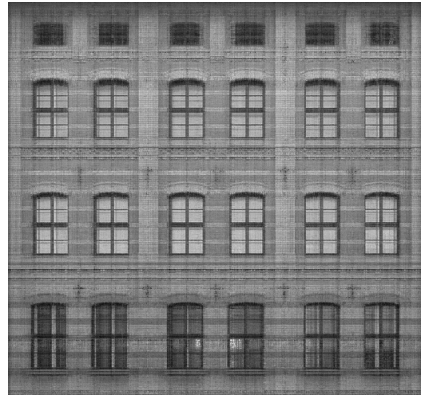}}
    }
    \\
    \subfigure[iTOS-1]{
        \scalebox{0.4}{\includegraphics{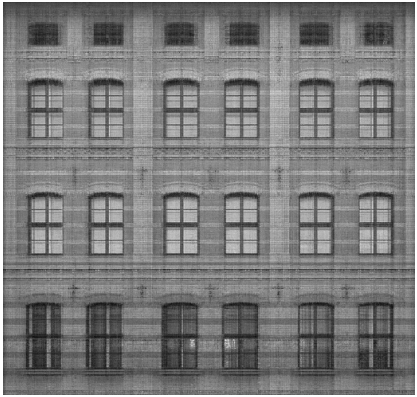}}
    }
    \subfigure[iTOS-2]{
        \scalebox{0.4}{\includegraphics{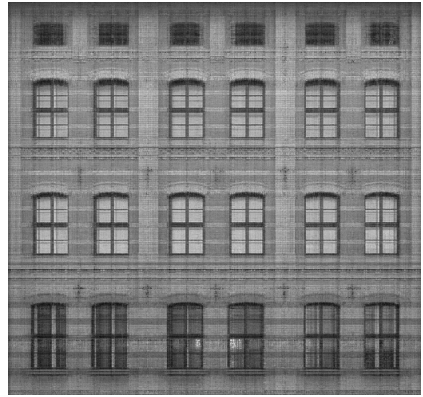}}
    }
    \caption{The missing and restored images. (a)\, $80 \%$ missing and $0.01$ noise image (SNR: 0.9641(dB), SSIM: 0.0336); (b)\, TOS (SNR: 17.3557(dB), SSIM: 0.6617); (c)\, iTOS-1 (SNR: 17.3558(dB), SSIM: 0.6617); (d)\, iTOS-2 (SNR: 17.3565(dB), SSIM: 0.6618).    }
    \label{restored3}
\end{figure}

%%%%%%%%%%%%%%%%%%%%%%%%%%%%%%%%%%%%%%%%%%%%%%%%%%%%%%%%%%%%%%%%%%%%%%%%%%%%%%%%%%%%%%%%%%%%%%%%%%%%%%%%%%%%%%%
\section{Conclusions and future works}

The three-operator splitting algorithm is a new operator splitting algorithm for solving the monotone inclusion problem (\ref{more two-monotone-inclusion}). In this paper, we proposed a class of inertial three-operator splitting algorithm, which combined the inertial methods and the three-operator splitting algorithm. We analyzed the convergence of the proposed algorithm based on the iKM iteration (\ref{inertial-KM-iteration}). As a direct application, we developed an inertial three-operator splitting algorithm for solving the convex optimization problem (\ref{sum-three-convex}), which had wide applications in signal and image processing. We also presented numerical experiments on the constrained image inpainting problem (\ref{constrained-image-inpainting}) to demonstrate the advantage of introducing the inertial terms.

When we finished this work, we find that \cite{attouch:hal-01708905,attouch:hal-01782016} also prove the convergence of the iKM iteration (\ref{inertial-KM-iteration}) under certain conditions on the inertial parameters and the relaxation parameters, which are different from Lemma \ref{iKM-convergence-1} and Lemma \ref{iKM-convergence-2}. It is natural to generalize the convergence analysis of the iKM iteration in \cite{attouch:hal-01708905,attouch:hal-01782016} to the proposed inertial three-operator splitting algorithm. We would like to compare the performance of variants inertial three-operator splitting algorithm for wide application problems, and study the convergence rate analysis of the inertial three-operator splitting algorithm in the context of convex minimization in the future works.

%%%%%%%%%%%%%%%%%%%%%%%%%%%%%%%%%%%%%%%%%%%%%%%%%%%%%%%%%%%%%%%%%%%%%%%%%%%%%%%%%%%%%%%%%%%%%%%%%%%%%%%%%%%%%%%%55
\section*{Acknowledgements}
We are appreciated for Professor Damek Davis for sharing his code online.

%%%%%%%%%%%%%%%%%%%%%%%%%%%%%%%%%%%%%%%%%%%%%%%%%%%%%%%%%%%%%%%%%%%%%%%%%%%%%%%%%%%%%%%%%%%%%%%%%%%%%%%%%%%%%%%%55
\section*{Funding}

This research was funded by the National Natural Science Foundations of China (11661056, 11771198, 11771347, 91730306, 41390454, 11401293), the China Postdoctoral Science Foundation (2015M571989)
and the Jiangxi Province Postdoctoral Science Foundation (2015KY51).

%%%%%%%%%%%%%%%%%%%%%%%%%%%%%%%%%%%%%%%%%%
\section*{Conflicts of interest}
The authors declare no conflict of interest.

% References
%\bibliographystyle{unsrt}
%\bibliography{klreference-en,essayfirst-en} %

\end{document}